\def\be{\begin{equation}}
\def\ee{\end{equation}}
\def\bq{\begin{eqnarray}}
\def\eq{\end{eqnarray}}
\def\beq{\begin{eqnarray*}}
\def\eeq{\end{eqnarray*}}
\newtheorem {theorem} {Theorem}
\newtheorem {proposition} [theorem]{Proposition}
\newtheorem {definition} [theorem]{Definition}
\newtheorem {lemma}  [theorem]{Lemma}
\begin{document}

\title[Transversal conics and the existence of limit cycles]
{Transversal conics and the existence of limit cycles}

\author[H. Giacomini and M. Grau]
{H\'ector Giacomini$^1$ and Maite Grau$^2$}

\address{$^1$ Laboratoire de Math\'ematiques et Physique Th\'eorique.
C.N.R.S. UMR 7350., Facult\'e des Sciences et Techniques.
Universit\'e de Tours., Parc de Grandmont 37200 Tours, France.}

\email{Hector.Giacomini@lmpt.univ-tours.fr}

\address{$^2$ Departament de Matem\`atica, Universitat de Lleida,
Avda. Jaume II, 69; 25001 Lleida, Catalonia, Spain}

\email{mtgrau@matematica.udl.cat}

\subjclass[2010]{34C05, 34C07, 37C27, 34C25, 34A34}

\keywords{transversal conic; Poincar\'e--Bendixson region; limit cycle; planar differential system}
\date{}
\dedicatory{}

\maketitle

\begin{abstract}
This paper deals with the problem of location and existence of limit cycles for real planar polynomial differential systems. We provide a method to construct Poincar\'e--Bendixson regions by using transversal conics. We present several examples of known systems in the literature showing different features about limit cycles: hyperbolicity, Hopf bifurcation, sky-blue bifurcation, rotated vector fields, \ldots for which the obtained Poincar\'e--Bendixson region allows to locate the limit cycles. Our method gives bounds for the bifurcation values of parametrical families of planar vector fields and intervals of existence of limit cycles.
\end{abstract}

\section{Introduction\label{sect1}}

We consider real planar polynomial differential systems of the form
\begin{equation} \label{eq1} \dot{x} \, = \, P(x,y), \quad \dot{y} \, = \, Q(x,y), \end{equation}
where the dot denotes derivation with respect to an independent variable $t$ and where $P(x,y)$ and $Q(x,y)$ are real polynomials with $P(0,0) \, =\, Q(0,0) \, = \, 0$. One of the open problems related with a system (\ref{eq1}) is to determine the number and location of its limit cycles. Poincar\'e--Bendixson theorem, see for instance \cite{DuLlAr, Perko} and also Theorem \ref{thpb}, can be very useful to prove the existence of a limit cycle and to give a region where it is located. However, this result is hardly found in applications due to the difficulty of constructing the boundaries of a Poincar\'e--Bendixson region. An example of that appears in \cite{Odani97} where Poincar\'e--Bendixson regions are used to provide a bound for the amplitude of the van der Pol limit cycle. The curves in the boundary of the Poincar\'e--Bendixson region provided in \cite{Odani97} are very intricate and the proof of its transversality is quite overwhelming. Another example appears at the beginning of Chapter 4 of \cite{ZhangZF}. It is proved that  the Li\'enard differential equation $\ddot{x} + f(x) \dot{x} + g(x) \, = \, 0$ has at least one closed orbit, under certain assumptions on the functions $f(x)$ and $g(x)$. One of the tools that is used is the construction of a Poincar\'e--Bendixson region using several segments of curves as boundaries. Our aim in this work is to find transversal conics which define Poincar\'e--Bendixson regions and thus, to prove the existence of limit cycles.

\smallskip

Consider a continuous curve $C$ in $\mathbb{R}^2$. We always assume that $C$ is nonempty. A \emph{ contact point} of $C$ is a point of the curve in which the flow of system (\ref{eq1}) and the curve are tangential.
\begin{definition}
We say that $C$ is \emph{transversal} if the flow of system {\rm (\ref{eq1})} crosses it in the same direction on all its points, except maybe a finite set of contact points.
\end{definition}
The curve $C$ is given in an implicit way when $C \, = \, \{ f(x,y) \, = \, 0 \}$ where $f(x,y)$ is a real function of class $\mathcal{C}^1$ for which the number of points satisfying the following system of equations is finite:
\[ f(x,y) \, = \, 0, \quad \frac{\partial f}{\partial x}(x,y) \, = \, 0, \quad \frac{\partial f}{\partial y}(x,y) \, = \, 0. \]
This last condition is equivalent to say that $f(x,y)$ is a square-free polynomial in case that $C$ is an algebraic curve.
We note that if $C \, = \, \{ f(x,y) \, = \, 0 \}$ then the contact points of $C$ are the points in $\mathbb{R}^2$ which satisfy the following two equations $\dot{f}(x,y) \, = \, 0$ and $f(x,y)=0$, where
\[ \dot{f}(x,y) \, := \, P(x,y) \, \frac{\partial f}{\partial x}(x,y) \, + \, Q(x,y) \, \frac{\partial f}{\partial y}(x,y). \]
In this case, $C$ is a transversal curve if either $\dot{f}(x,y) \geq 0$ for all $(x,y) \in C$ or $\dot{f}(x,y) \leq 0$ for all $(x,y) \in C$, and $\dot{f}(x,y)=0$ only on a finite number of points of $C$. \par
The curve $C$ is given in terms of a parametrization when $C \, = \, \{ \zeta(\sigma) \, : \, \sigma \in \mathcal{I} \}$ where $\mathcal{I}$ is a real interval and $\zeta(\sigma) \, = \, (\zeta_1(\sigma), \zeta_2(\sigma))$ is a real function of class $\mathcal{C}^1$ with $\zeta: \mathcal{I} \subseteq \mathbb{R} \longrightarrow \mathbb{R}^2$. In this case, the contact points of $C$ are the points $\zeta(\sigma)$ for which $\sigma$ satisfies the equation $\dot{\zeta}(\sigma) \, = \, 0$ where
\[ \dot{\zeta}(\sigma) \, := \, P\left(\zeta(\sigma)\right) \left(-\zeta_2'(\sigma) \right) \, + \, Q\left(\zeta(\sigma)\right) \, \zeta_1'(\sigma). \] And $C$ is a transversal curve if
either $\dot{\zeta}(\sigma) \geq 0$ for all $\sigma \in \mathcal{I}$ or $\dot{\zeta}(\sigma) \leq 0$ for all $\sigma \in \mathcal{I}$, and $\dot{\zeta}(\sigma)=0$ only for a finite number of values $\sigma \in \mathcal{I}$. \par
A parameterized differentiable curve $\zeta: \mathcal{I} \longrightarrow \mathbb{R}^2$ is said to be regular if $\zeta'(\sigma) \neq (0,0)$ for all $\sigma \in \mathcal{I}$. A closed plane curve $C$ is a regular parameterized curve $\zeta: [a,b] \longrightarrow \mathbb{R}^2$ such that $\zeta$ and all its derivatives have the same value at $a$ and $b$. The curve is said to be simple if it has no self-intersections, that is if $\sigma_1, \sigma_2 \in [a,b)$ and $\sigma_1 \neq \sigma_2$, then $\zeta(\sigma_1) \neq \zeta(\sigma_2)$. For further information about these classical concepts, see for instance \cite{doCarmo}.

\smallskip

A limit cycle of (\ref{eq1}) is an isolated periodic orbit. A transversal section of system (\ref{eq1}) is an arc of a curve without contact points. Given a limit cycle $\Gamma$ there always exist a transversal section $\Sigma$ which can be parameterized by $r \in (-\varepsilon,\varepsilon)$ with $\varepsilon>0$ and $r=0$ corresponding to a common point between $\Gamma$ and $\Sigma$. Given $r \in (-\varepsilon,\varepsilon)$, we consider the flow of system (\ref{eq1}) with initial point the one corresponding to $r$ and we follow this flow for positive values of $t$. It can be shown, see for instance \cite{Perko}, that for $\varepsilon$ small enough, the flow cuts $\Sigma$ again at some point $\mathcal{P}(r)$. The map $r \longrightarrow \mathcal{P}(r)$ is called the Poincar\'e map associated to the limit cycle $\Gamma$ of system (\ref{eq1}). It is clear that $\mathcal{P}(0) \, = \, 0$. If $\mathcal{P}'(0) \neq 1$, the limit cycle $\Gamma$ is said to be hyperbolic. If the expansion of $\mathcal{P}(r)$ around $r=0$ is of the form $\mathcal{P}(r) \, = \, r \, + \, a_\mu r^\mu \, + \, \mathcal{O}(r^{\mu+1})$ with $a_{\mu} \neq 0$ and $\mu \geq 2$, we say that $\Gamma$ is a multiple limit cycle of multiplicity $\mu$. A classical result, see for instance \cite{Perko}, states that if $\Gamma \, = \, \left\{ \gamma(t) \, : \, t \in [0,T) \right\}$, where $\gamma(t)$ is the parametrization of the limit cycle in the time variable $t$ of system (\ref{eq1}) and $T>0$ is the period of $\Gamma$, that is, the lowest positive value for which $\gamma(0) = \gamma(T)$, and $\gamma(0) = \Gamma \cap \Sigma$, then
\begin{equation} \label{eqpp} \mathcal{P}'(0) \, = \, \exp \left\{ \int_{0}^{T} {\rm div}\left( \gamma(t) \right) dt \right\}, \end{equation}
where \[ {\rm div}(x,y) \, = \, \frac{\partial P}{\partial x}(x,y) \, + \, \frac{\partial Q}{\partial y}(x,y) \] is the {\em divergence} of the differential system (\ref{eq1}). It is clear that if $\mathcal{P}'(0)>1$ (resp. $\mathcal{P}'(0)<1$), then $\Gamma$ is an unstable (resp. stable) limit cycle. If $\Gamma$ is a multiple limit cycle of multiplicity $\mu$ and $\mu$ is odd, then $\Gamma$ is unstable if $a_\mu>0$ and stable if $a_\mu<0$. If $\mu$ is even, then the limit cycle $\Gamma$ is said to be semi-stable. For the definitions and related results, see for instance \cite{DuLlAr, Perko, YanQian}.
\par
The Poincar\'e--Bendixson theorem, which can be found for instance in section 1.7 of \cite{DuLlAr} or in section 3.7 of \cite{Perko}, has as a corollary the following result which motivates the definition of Poincar\'e--Bendixson region. See also Theorem 4.7 of Chapter 1 of \cite{ZhangZF}.
\begin{theorem} \label{thpb}
Suppose $R$ is a finite region of the plane $\mathbb{R}^2$ lying between two simple closed curves $C_1$ and $C_2$. If
\begin{itemize}
\item[{\rm (i)}] the curves $C_1$ and $C_2$ are transversal for system {\rm (\ref{eq1})} and the flow crosses them towards the interior of $R$, and
\item[{\rm (ii)}] $R$ contains no critical points.
\end{itemize}
Then, system {\rm (\ref{eq1})} has an odd number of limit cycles (counted with multiplicity) lying inside $R$.
\end{theorem}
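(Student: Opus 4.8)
The plan is to reduce the two--dimensional statement to a one--dimensional counting problem: first establish that $\overline{R}$ is positively invariant, then produce a transversal section joining the two boundary components, and finally count, with multiplicity, the fixed points of the associated Poincar\'e return map by a change--of--sign argument.

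By hypothesis (i) the flow crosses $C_1\cup C_2=\partial R$ towards the interior of $R$ at every point that is not one of the finitely many contact points, and at a contact point the defining function of the corresponding curve, read along the orbit through it, attains a local extremum on the interior side, so no orbit can leave $R$ there either; hence $\gamma^{+}(p)\subset\overline{R}$ for every $p\in\overline{R}$. Since $\overline{R}$ is compact and, by (ii), contains no critical point, the Poincar\'e--Bendixson theorem (\cite{DuLlAr,Perko}) shows that $\omega(p)$ is a periodic orbit for each $p\in\overline{R}$; in particular $R$ contains at least one periodic orbit. As $P$ and $Q$ are polynomials, the Poincar\'e map along any periodic orbit is analytic, so the displacement function associated with any transversal section cannot vanish on an interval; thus every periodic orbit in $R$ is a limit cycle of some finite multiplicity, and (the compact set $\overline{R}$ having no critical points and hence no graphics on which a sequence of cycles could accumulate) there are only finitely many of them.

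Next I would construct a transversal section $\ell\subset\overline{R}$, that is, a $\mathcal{C}^{1}$ arc without contact points having one endpoint on $C_1$ (at a non--contact point) and the other on $C_2$, obtained by splicing a short transversal segment through a point of a limit cycle of $R$ with transversal continuations reaching the two boundary curves; this is the technical heart of the proof and is carried out by a classical argument in this setting (see \cite{DuLlAr,Perko,ZhangZF}). Such an $\ell$ cuts the annular region $R$ into a simply connected piece, so by the Jordan curve theorem every periodic orbit $\Gamma\subset R$ must meet $\ell$ --- otherwise $\Gamma$ would bound a disk contained in $R$, forcing a critical point inside, against (ii) --- and in fact it meets $\ell$ in exactly one point, since $\Gamma$ winds once around the region and all its crossings of the transversal $\ell$ have the same sign. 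Parametrise $\ell$ by $s\in[0,1]$, with $s=0$ on $C_2$ and $s=1$ on $C_1$. Every orbit issuing from $\ell$ returns to $\ell$: its $\omega$--limit is a periodic orbit, which meets $\ell$, and the annular topology forces the orbit itself to cross $\ell$ after one turn. Hence the first--return map $T$ on $\ell$ is well defined, continuous, analytic, and (distinct orbits not intersecting) monotone, and $T(0)>0$, $T(1)<1$, because by positive invariance the forward orbits of points of $C_1\cup C_2$ never return to $\partial R$.

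Finally I would run the parity count. The displacement function $g(s):=T(s)-s$ is analytic on $[0,1]$, with $g(0)>0>g(1)$ and $g\not\equiv 0$, hence it has finitely many zeros $s_{1}<\dots<s_{k}$ in $(0,1)$, of finite orders $m_{1},\dots,m_{k}$; the zero $s_{i}$ corresponds to a unique periodic orbit of $R$, and $m_i$ is exactly the multiplicity of the corresponding limit cycle in the sense of the expansion around (\ref{eqpp}) (a hyperbolic cycle being counted with multiplicity one), because the order of vanishing of the displacement is invariant under the local diffeomorphism relating $s$ to the Poincar\'e parameter $r$ on $\ell$. On each interval between consecutive zeros $g$ keeps constant sign, which flips across $s_{i}$ precisely when $m_{i}$ is odd; since $g$ goes from positive (near $s=0$) to negative (near $s=1$), the number of indices $i$ with $m_{i}$ odd is odd, and therefore $m_{1}+\dots+m_{k}$ is odd. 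As this sum is exactly the number of limit cycles of (\ref{eq1}) lying in $R$ counted with multiplicity, the assertion follows. The main obstacle is the construction of the spanning transversal section $\ell$ and the attendant verification that the return map $T$ is globally defined and monotone on it; the positive invariance of $\overline{R}$ and the concluding sign--change argument are comparatively routine.
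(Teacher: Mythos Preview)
The paper does not supply its own proof of Theorem~\ref{thpb}: it presents the statement as a known corollary of the Poincar\'e--Bendixson theorem and refers the reader to \cite{DuLlAr}, \cite{Perko}, and Theorem~4.7 of Chapter~1 of \cite{ZhangZF}. There is therefore nothing in the paper to compare your argument against beyond these citations.

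Your outline is the standard route to this result and is essentially sound: positive invariance of $\overline{R}$, existence of a periodic orbit by Poincar\'e--Bendixson, reduction to a global analytic return map on a transversal arc $\ell$ spanning the annulus, and the sign--change count for the displacement $g(s)=T(s)-s$ with $g(0)>0>g(1)$. The parity computation at the end is correct. Two places deserve a little more care. First, you openly defer the construction of the spanning transversal $\ell$ and the global definition and monotonicity of $T$ to the references; that is acceptable here since the paper itself only quotes the theorem, but be aware that this is precisely where most of the work lies, and the argument that every periodic orbit meets $\ell$ in exactly one point should invoke the standard ``consecutive crossings of a transversal are monotone'' lemma rather than an informal winding--number remark. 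Second, $T$ is not literally defined at the endpoints $s=0,1$ lying on $\partial R$; what one actually uses is that $T$ extends continuously there with $T(0)>0$ and $T(1)<1$, or equivalently that $g>0$ near $s=0$ and $g<0$ near $s=1$, which follows from positive invariance. With these clarifications your proof matches the classical one cited by the paper.
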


In such a case, we say that $R$ is a \emph{Poincar\'e--Bendixson region} for system (\ref{eq1}).

\section{Description of the method\label{sect2}}

In order to construct Poincar\'e--Bendixson regions, we will use curves formed by transversal conics. We consider conics $\{ f(x,y)\, = \, 0 \}$ of the form
\begin{equation} \label{eqf} f(x,y) \, = \, 1 + s_1 x + s_2 y + s_3 x^2 + s_4 xy + s_5 y^2, \end{equation}
with $s_i \in \mathbb{R}$, $i=\overline{1,5}$. If the system is symmetric with respect to the origin, that is invariant under the change $(x,y) \to (-x,-y)$, one can take $s_1\, =\, s_2 \, = \, 0$. \par
Given a system (\ref{eq1}), the problem of determining the values of $s_i$ for which the conic $\{ f(x,y) \, = \, 0 \}$ is transversal to the flow (if these values exist) is a complicated problem due to the overwhelming calculations which deal with the five parameters $s_i$, $i=\overline{1,5}$. We describe a method in which we consider one-parameter conics and then, the determination of for which values (if exist) it is transversal is much easier. We present the method in six steps.

\subsection{First step: local solution of the system}

Fix a natural $N \geq 4$ and consider
\[ \tilde{x}(t) \, = \, a_0 + \sum_{i=1}^{N} a_i t^i, \quad \tilde{y}(t) \, = \, b_0 + \sum_{i=1}^{N} b_i t^i, \] with $a_i, b_i \in \mathbb{R}$. Find $a_i, b_i$, for $i=1,2,\ldots,N$, so that $(\tilde{x}(t),\tilde{y}(t))$ is an approximation of the solution of system (\ref{eq1}) with initial condition $(a_0,b_0)$, that is
\[ \dot{\tilde{x}}(t) - P\left(\tilde{x}(t), \tilde{y}(t)\right) \, = \, \mathcal{O}(t^N), \quad \dot{\tilde{y}}(t) - Q\left(\tilde{x}(t), \tilde{y}(t)\right) \, = \, \mathcal{O}(t^N). \] Note that the initial condition $(a_0,b_0)$ is not fixed. We take, for instance, either $b_0=0$ or $a_0=0$ and then we get an approximation  for small values of $|t|$ of the solution up to order $N$ in the independent variable $t$ and at the point $(a_0,0)$ or $(0,b_0)$. As we will see in the examples, in certain cases it is better to choose a point of the form $(a_0,0)$ as initial condition and in other cases it is better to choose a point of the form $(0,b_0)$, in order to find a transversal conic.

\subsection{Second step: glue the curve $f=0$ to the solution}

Find the values $s_i$, $i=\overline{1,5}$, in (\ref{eqf}) so that the Taylor expansion of $f\left(\tilde{x}(t),\tilde{y}(t)\right)$ at $t=0$ has an order as high as possible. In general $ \displaystyle f\left(\tilde{x}(t),\tilde{y}(t)\right) \, = \, \mathcal{O}(t^5)$. In this way we get a single parameter conic which only depends on $a_0$ or $b_0$.

\subsection{Third step: parameterize the curve $f=0$}

Find a rational pa\-ra\-me\-tri\-za\-tion $\pi: D \subseteq \mathbb{R} \to \mathbb{R}^2$ of the conic $f(x,y)=0$ such that the parameter is uniform, that is $\pi'(m) \neq (0,0)$ for all $m \in D$, see for instance Chapter 3 of \cite{Walker}. The domain $D$ of definition of the parametrization of a conic can be chosen to be $\mathbb{R}$ minus $0$, $1$ or $2$ points.  Define the rational function
\[ \varphi(m) \, := \, P(\pi(m)) \, \frac{\partial f}{\partial x}(\pi(m)) \, + \, Q(\pi(m)) \, \frac{\partial f}{\partial y}(\pi(m)). \]
Remark that the denominator of $\varphi(m)$ is a power of the denominator of $\pi(m)$. The zeros of the denominator of $\pi(m)$ correspond to the ``points at infinity'' of the conic $f=0$ and the denominator of $\pi(m)$ has no real roots if the conic is an ellipse; in general, has one (double) real root if the conic is a parabola; or, in general, has two simple real roots if the conic is a hyperbola. We can check the type of the conic by looking at the denominator of $\pi(m)$.

\smallskip

The real plane $\mathbb{R}^2 = \{(x,y) \, : \, x,y \in \mathbb{R} \}$ can be embedded in the sphere $\mathbb{S}^2 \, = \, \{ (X,Y,Z) \in \mathbb{R}^3 \, : \, X^2+Y^2+Z^2=1\}$ by means of the Poincar\'e compactification, see for instance Chapter 5 of \cite{DuLlAr}. The plane is embedded in the northern hemisphere $\{ (X,Y,Z) \in \mathbb{S}^2 \, : \, Z>0\}$ by the projection $p^{+}$ and also in the southern hemisphere $\{ (X,Y,Z) \in \mathbb{S}^2 \, : \, Z<0\}$ by the projection $p^{-}$ where
\[ \begin{array}{llll} p^{\pm} : & \mathbb{R}^2 & \longrightarrow & \mathbb{S}^2 \\
& (x,y) & \mapsto & \displaystyle \left( \frac{\pm\, x}{\sqrt{x^2+y^2+1}}, \, \frac{\pm\, y}{\sqrt{x^2+y^2+1}}, \, \frac{\pm \, 1}{\sqrt{x^2+y^2+1}} \right).  \end{array} \] We consider one of the hemispheres together with the equator $\{(X,Y,Z) \, : \, Z=0 \}$ and we project them in the disk $\{ (x,y) \in \mathbb{R}^2 \, : \, x^2+y^2 \leq 1 \}$, which we denote as the Poincar\'e disk. \par Given a real algebraic curve $f(x,y)=0$ of degree $n$, we can embed it in the sphere $\mathbb{S}^2$ as the curve $F(X,Y,Z)=0$ by \[ F(X,Y,Z) \, := \, Z^n \, f\left(\frac{X}{Z}, \, \frac{Y}{Z} \right). \] The ``points at infinity'' of $f(x,y)=0$ correspond to the intersection points of the curve on the sphere $F(X,Y,Z)=0$ with the equator, that is, the points $(X,Y,0) \in \mathbb{S}^2$ such that $F(X,Y,0) \, = \, 0$. \par Analogously, the flow of system (\ref{eq1}) can be embedded into the sphere by the Poincar\'e compactification. The singular points ``at infinity'' of system (\ref{eq1}) correspond to the singular points of the embedded flow on the equator of the Poincar\'e disk.

\smallskip

Remark that if $\varphi(m) \geq 0$ or $\varphi(m) \leq 0$ for all $m \in D$, and $\varphi(m)=0$ only for a finite number of values $m$, then $f=0$ is a transversal conic for system (\ref{eq1}). We only need to check the changes of sign of the numerator of $\varphi(m)$ in order to show whether $f(x,y)=0$ is a transversal conic. Then, we check the denominator to see the type of the conic we are dealing with. \par
Remark that $\varphi(m)$ depends rationally on the initial condition $(a_0,b_0)$.

\subsection{Fourth step: necessary condition for transversal conics}

Recall that we only need to consider the numerator of $\varphi(m)$ to study its changes of sign. Suppose the initial condition is $(a_0,0)$ and that $a_0^*$ is a value for which $\varphi(m)>0$ for all $m \in D$. Then, for any $a_0$ close enough to $a_0^*$ we have that $\varphi(m)>0$ for all $m \in D$. The value $\mathbf{\tilde{a}_0}$ at the end of this interval in $a_0$ is such that $\varphi(m)$ has all its real roots with even multiplicity or may be a root of the coefficient of the monomial of highest order of the numerator of $\varphi(m)$. \par
Take the numerator of $\varphi(m)$ and consider the resultant of it with its derivative with $m$ as variable. We get a polynomial in $a_0$, which we denote by $R(a_0)$. For the definition and properties of the resultant of two polynomials see Chapter 1 of \cite{Walker}.\par We have that either all the constructed conics are transversal, either there are none or a necessary condition to have transversal conics is that the polynomial $R(a_0)$ has a real root $\mathbf{\tilde{a}_0}$.

\subsection{Fifth step: find transversal conics}

Take each one of the real roots of $R(a_0)$ and check whether all the real roots of the numerator of $\varphi(m)$ are of even multiplicity. If $\mathbf{\tilde{a}_0}$ is such a root, then it defines a transversal conic and, in general, the border of a band of transversal conics. \par We take a rational value $a_0^{*} \in \mathbb{Q}$ close to the real root $\mathbf{\tilde{a}_0}$ and such that we can prove that the numerator of the rational function $\varphi(m)$ does not change sign using Sturm sequences. \par In this way, we have reduced the problem of finding transversal curves to one which is purely algebraic. This problem can be very difficult to solve when the system has parameters. In the latter case, and in order to apply Sturm sequences, we need to study the sign of expressions which depend on the parameters of the system. This is a sharp problem which we only tackle in Examples 1 (subsection \ref{sect31}) and 2 (subsection \ref{sect32}). In the other examples either the system has no parameters or the system is a (semi-)complete family of rotated vector fields with respect to one parameter, see for instance section 4.6 in \cite{Perko} or section 4.3 in \cite{ZhangZF} for a definition. The knowledge of the properties of the bifurcation diagram of limit cycles in rotated vector fields allows us to give Poincar\'e--Bendixson regions for fixed values of the parameter and establish results for values of the parameter in a certain interval.

\subsection{Sixth step: Poincar\'e--Bendixson regions}

Two nested transversal ellipses whose corresponding $\varphi(m)$ is of opposite sign define a region in which one can directly apply Theorem \ref{thpb} and prove the existence of at least one limit cycle, provided that the region contains no critical points. A transversal hyperbola or a transversal parabola together with a part of the equator of the Poincar\'e disk can define a Poincar\'e--Bendixson region. In Examples 4 (subsection \ref{sect34}) and 5 (subsection \ref{sect35}) we make use of the latter type of Poincar\'e--Bendixson regions.

\section{Examples\label{sect3}}

We present six different examples in which a Poincar\'e--Bendixson region can be constructed by using the method described above.

\subsection{Example 1: a quadratic system\label{sect31}}

The following system appears in p. 347 of \cite{YanQian}.
\begin{equation} \label{eqex1} \dot{x} \, = \, \delta x + \ell x^2 -y, \quad \dot{y} \, = \, x(1+ax+by), \end{equation} where $a,b, \delta, \ell \in \mathbb{R}$ and with $\delta a(b+2 \ell)>0$. In \cite{chen77} it is proved that this system has a unique limit cycle for $\delta \in (0,\delta^*)$ or $\delta \in (\delta^*,0)$. The value $\delta^*$ is a \emph{bifurcation value} for which the limit cycles disappears. We will fix three of the four parameters and we try to study which is the behavior of the limit cycle for $\delta$ close to $\delta^*$.

\subsubsection{System {\rm (\ref{eqex1})} with $a \, =\, b\, =\,  \ell\, =\, 1$.}

We take
\begin{equation} \label{eqex11} \dot{x} \, = \, \delta x + x^2-y, \quad \dot{y} \, = \, x(1+x+y) \end{equation}
and $\delta>0$. \par Note that when $\delta<1$, the only finite singular point is the origin, which is an unstable focus. For each $\delta \in (0,1)$, we find a Poincar\'e--Bendixson region for system (\ref{eqex11}) bounded by two transversal ellipses, one in the interior of the region bounded by the limit cycle and the other in its exterior. Thus, we prove the existence of at least one limit cycle for $\delta \in (0,1)$. We take the initial condition $(0,b_0)$ in this case. The expression of the polynomial $f(x,y)$ corresponding to the application of the first and second steps described in section \ref{sect2} is:
\[ \begin{array}{lll} f(x,y) & = & \displaystyle 1 \, +\, \Big[  9 (b_0+1)^4 x^2 -12 (b_0+1)^2 (b_0 \delta+b_0+\delta) x y + \big(18 b_0^3+ \vspace{0.2cm} \\
& & \displaystyle 2 b_0^2 \delta^2-14 b_0^2 \delta+29  b_0^2+4 b_0 \delta^2-14 b_0 \delta+36 b_0+2 \delta^2+9\big) y^2 +
\vspace{0.2cm} \\ & & \displaystyle
 12 b_0 (b_0+1)^2 (b_0 \delta+b_0+\delta) x -2 b_0 \big(9 b_0^3+2 b_0^2 \delta^2-14
   b_0^2 \delta+
\vspace{0.2cm} \\ & & \displaystyle
2 b_0^2+4 b_0 \delta^2-14 b_0 \delta+9 b_0+2 \delta^2\big) y
\Big] \bigg / \vspace{0.2cm} \\ & & \displaystyle \Big[b_0^2 \big(2 b_0^2 \delta^2-14 b_0^2 \delta-25
   b_0^2+4 b_0 \delta^2-14 b_0 \delta  -18 b_0+2 \delta^2-9\big)\Big].
\end{array} \]

The expression of the function $\varphi(m)$ defined in the third step of section \ref{sect2} is of the form \begin{equation} \label{eqfi1} \varphi(m) \, = \, m^4 \, \frac{P_2(m,b_0,\delta)}{\tilde{P_2}(m,b_0,\delta)^3}, \end{equation} where $P_2$ and $\tilde{P_2}$ are polynomials in $m$, $b_0$ and $\delta$ of degree $2$ in $m$. \par

We denote by $b_{0i}(\delta)$ the value corresponding to the interior ellipse and by $b_{0e}(\delta)$ the one of the exterior ellipse.
In the following two Figures \ref{casocuadratico_d1s10} and \ref{casocuadratico_d9s10}, we represent in the $(x,y)$-plane the limit cycle (numerically found) in red, and the two transversal ellipses in blue for $\delta \, = \, 1/10$ and $\delta \, = \, 9/10$, respectively.

\begin{figure}[htb]
\includegraphics[width=0.5\textwidth]{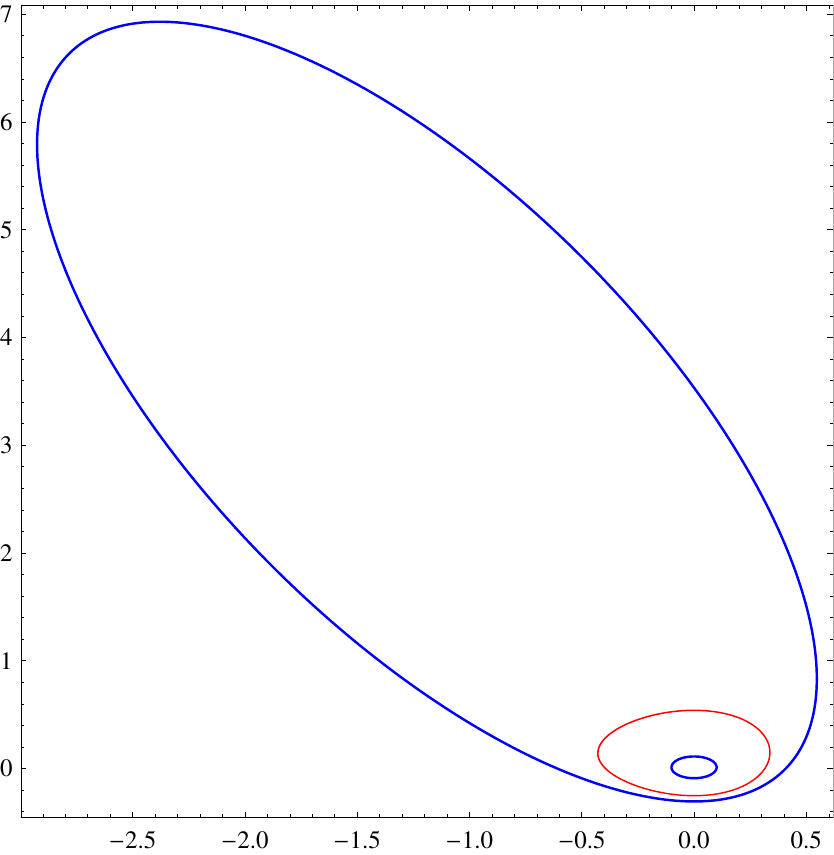}
\caption{Stable limit cycle in red and transversal ellipses in blue when $\delta \, = \, 1/10$. \[  b_{0i} \, = \, -\frac{1}{11}\, \approx \, -0.091, \quad \ b_{0e}\, = \,-\frac{613}{2000}\, \approx \, -0.306. \]}
\label{casocuadratico_d1s10}
\end{figure}

\begin{figure}[htb]
\includegraphics[width=0.5\textwidth]{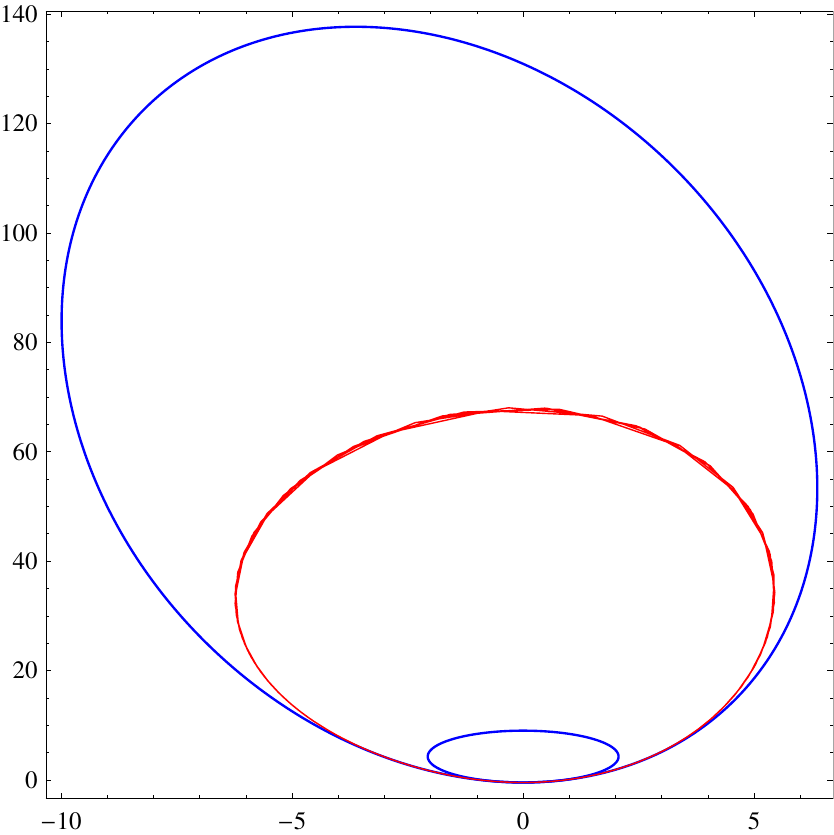}
\caption{Stable limit cycle in red and transversal ellipses in blue when $\delta \, = \, 9/10$. \[  b_{0i} \, = \, -\frac{9}{19}\, \approx \, -0.474, \quad b_{0e} \, = \,-\frac{1646850590977}{3435973836800}\, \approx \, -0.479. \]}
\label{casocuadratico_d9s10}
\end{figure}

For any $\delta \in (0,1)$, the interior ellipse is a circle and corresponds to \[ b_{0i}(\delta) \, = \, -\, \frac{\delta}{\delta+1}. \] If we substitute this expression in (\ref{eqfi1}) we get
\[ \varphi(m) \, = \, \frac{16(\delta-\delta^3)m^4}{\left( (\delta^2-1)m^2-1\right)^3}. \] We observe that since $\delta \in (0,1)$ we have that $\varphi(m) < 0$ for all $m \neq 0$ and $\varphi(0)=0$. Thus, the curve that we have obtained is transversal. Indeed, since $(\delta^2-1)m^2-1$ has no real roots in $m$ when $0<\delta<1$, we have that the curve is an ellipse. \par The exterior ellipse corresponds to
\[ b_{0e}(\delta) \, = \, \left\{ \begin{array}{ll} \displaystyle (-43 \delta-57)/200 & \mbox{if } \displaystyle \delta \, \leq \, \frac{647}{1000}, \\ \vspace{0.2cm}
 -\frac{1}{2} -\frac{13(\delta-1)}{64}+\frac{1339(\delta-1)^2}{32768} & \\ +\frac{137467
   (\delta-1)^3}{8388608}-\frac{115697945 (\delta-1)^4}{8589934592}& \mbox{if } \displaystyle \delta \,  > \, \frac{647}{1000}. \end{array} \right. \]
The function $b_{0e}(\delta)$ is not continuous in $\delta=0.647$ because \[ \lim_{\delta \to 0.647^{-}} b_{0e}(\delta) \, = \, 0.4241 \ldots \quad \mbox{and} \quad \lim_{\delta \to 0.647^{+}} b_{0e}(\delta) \, = \, 0.4256 \ldots. \] In order to prove whether this expression gives rise to a transversal ellipse for the flow of system (\ref{eqex11}), we need to study the changes of sign of $\varphi(m)$ given in (\ref{eqfi1}), once we have substituted $b_0$ by $b_{0e}(\delta)$. \par If
\[ b_{0e}(\delta) \, = \, \frac{-43\delta - 57}{200}, \] then the discriminant with respect to $m$ of the polynomial $P_2$ defined in (\ref{eqfi1}) is $(\delta-1)(43 \delta - 143)^4 (43 \delta - 57) Q_{12}(\delta)$ where $Q_{12}(\delta)$ is a polynomial of degree $12 $ in $\delta$ which has no real roots for $\delta \in (0,0.647]$, which can be proved using Sturm sequences. And the discriminant of the polynomial $\tilde{P_2}$ defined in (\ref{eqfi1}) is $(\delta-1)(43 \delta-143)^4Q_3(\delta)$ where $Q_3(\delta)$ is a polynomial of degree $3$ in $\delta$ which has no real roots for $\delta \in (0,0.647]$. Since these discriminants are negative for $\delta \in (0,0.647]$, we have that $\varphi(m)$ does not change sign and that the curve $f=0$ is an ellipse. \par If
\[ \begin{array}{lll} \displaystyle b_{0e}(\delta) & = & \displaystyle -\frac{1}{2} -\frac{13(\delta-1)}{64}+\frac{1339(\delta-1)^2}{32768} \vspace{0.2cm} \\ & & \displaystyle +\frac{137467 (\delta-1)^3}{8388608}-\frac{115697945 (\delta-1)^4}{8589934592}, \end{array} \]
then the discriminant with respect to $m$ of the polynomial $P_2$ defined in (\ref{eqfi1}) is of the form $(\delta-1)^7 Q_4(\delta)^{14} Q_{32} (\delta)$ where $Q_j(\delta)$ is a polynomial of degree $j$ in $\delta$ without real roots in the interval $\delta \in (0.647,1)$, $j=4,32$. And the discriminant with respect to $m$ of the polynomial $\tilde{P_2}$ defined in (\ref{eqfi1}) is of the form $(\delta-1) \tilde{Q}_4(\delta)^4 Q_{11}(\delta)$ where $\tilde{Q}_4(\delta)$ and $Q_{11}(\delta)$ are polynomials in $\delta$ of degrees $4$ and $11$, respectively, without real roots in the interval $\delta \in (0.647,1)$. Again, since these discriminants are negative for $\delta \in (0.647,1)$, we have that $\varphi(m)$ does not change sign and that the curve $f=0$ is an ellipse.

\smallskip

In Figure \ref{casocuadratico_b0ib0e} we represent the functions $b_{0i}(\delta)$ and $b_{0e}(\delta)$ in the $(\delta,b_0)$-plane, for $\delta \in (0,1)$.
\begin{figure}[htb]
\includegraphics[width=0.5\textwidth]{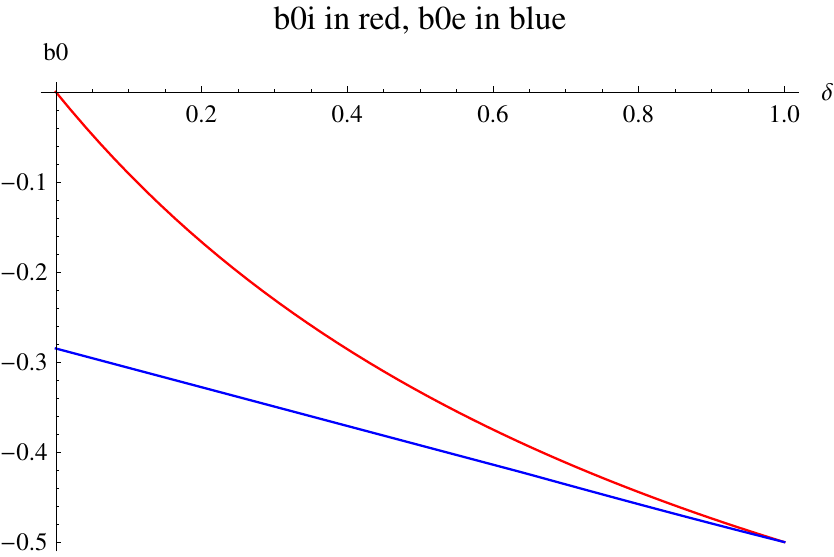}
\caption{Values $b_{0i}(\delta)$ and $b_{0e}(\delta)$ for the transversal ellipses.}
\label{casocuadratico_b0ib0e}
\end{figure}

We note that \[ b_{0i}(\delta) - b_{0e}(\delta) \, = \, \frac{3}{64}(1-\delta) + \mathcal{O}(1-\delta)^2. \] There is a band of exterior ellipses for a certain region of the space of parameters $(\delta,b_0)$. We take a piecewise rational function in the space of parameters $(\delta,b_0)$ inside this region in order to find this $b_{0e}(\delta)$ for the exterior ellipse. When $\delta \to 1$, the band becomes narrower and we have approximated the limiting curves by their Taylor expansion when $\delta \to 1$. We remark that both the inner and the outer transversal ellipses coalesce in the parabola $1 - x^2 + 2 y=0$ when $\delta=1$.

\smallskip

Given an algebraic curve $f(x,y)=0$ where $f(x,y)$ is a real polynomial, we say that it is an {\em invariant algebraic curve} for system (\ref{eq1}) if there exists a polynomial $k(x,y)$ such that
\[ P(x,y) \, \frac{\partial f}{\partial x}(x,y) \, + \, Q(x,y) \, \frac{\partial f}{\partial y}(x,y) \, = \, k(x,y) \, f(x,y), \] for all $(x,y) \in \mathbb{R}^2$. We always assume that the curve $f=0$ is not empty and that $f(x,y)$ is square-free. The function $k(x,y)$ is called the cofactor of $f$ and it is a polynomial of degree lower than the degree of $P(x,y)$ and $Q(x,y)$. Note that if $f(x,y)=0$ is an invariant algebraic curve of the differential system (\ref{eq1}), then it is formed by orbits of the system. See, for instance, \cite{CGG05} and references therein for more information about invariant algebraic curves. \par
Let $\Gamma$ be a limit cycle of system (\ref{eq1}) whose parametrization using the independent variable $t$ is $\Gamma \, = \, \left\{ \gamma(t) \, : \, t \in [0,T) \right\}$ where $T>0$ is the period of $\Gamma$. And let $f(x,y)=0$ be an invariant algebraic curve of system (\ref{eq1}) which does not contain the limit cycle $\Gamma$. We can assume without loss of generality that $f(\gamma(t)) >0$ for all $t \in [0,T]$. If not, we can consider $-f$ instead of $f$. We clearly have that
\[ \begin{array}{l} \displaystyle \int_{0}^{T} \left. \frac{ P(x,y) \, \frac{\partial f}{\partial x}(x,y) \, + \, Q(x,y) \, \frac{\partial f}{\partial y}(x,y)}{f(x,y)} \right|_{(x,y) = \gamma(t)} dt \, =  \vspace{0.2cm} \\ \displaystyle \qquad = \, \left[ \ln f(\gamma(t)) \right]_{t=0}^{t=T} \, = \, \ln f(\gamma(T)) \, - \, \ln f(\gamma(0)) \, = \, 0, \end{array} \] because $\gamma(0) \, = \, \gamma(T)$. Hence,
\begin{equation} \label{eqcof}
\int_{0}^{T} k(\gamma(t)) \, dt \, = \, 0, \end{equation} where $k(x,y)$ is the cofactor associated to $f$. See \cite{GG05} for other related properties of cofactors and their relation with limit cycles.

\begin{lemma} \label{lem1}
When $\delta \, = \, 1$, the limit cycle of system {\rm (\ref{eqex11})} disappears.
\end{lemma}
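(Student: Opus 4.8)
The plan is to combine, at the value $\delta=1$, an invariant algebraic curve with the cofactor identity (\ref{eqcof}) and the divergence formula (\ref{eqpp}). Together these force every periodic orbit of (\ref{eqex11}) to be hyperbolic and unstable, a configuration which is impossible in the plane given the critical points at hand.

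First I would verify that the parabola singled out in the remark above, $f(x,y)=1-x^2+2y$, is an invariant algebraic curve of (\ref{eqex11}) when $\delta=1$. A direct computation gives $P\,f_x+Q\,f_y=2x\,(1-x^2+2y)=2x\,f$, so $\{f=0\}$ is invariant with cofactor $k(x,y)=2x$. Since $\{f=0\}$ is a parabola it carries no periodic orbit, and being invariant it is a union of orbits; hence any limit cycle $\Gamma=\{\gamma(t):t\in[0,T)\}$ of (\ref{eqex11}) at $\delta=1$ is disjoint from it and lies in one of the two connected, simply connected regions $\{f>0\}$ or $\{f<0\}$.

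Next I would feed this cofactor into (\ref{eqcof}) to get $\int_0^T 2x(\gamma(t))\,dt=0$. Since the divergence of (\ref{eqex11}) at $\delta=1$ is ${\rm div}(x,y)=1+3x$, this yields $\int_0^T{\rm div}(\gamma(t))\,dt=T>0$, so by (\ref{eqpp}) the Poincar\'e map of $\Gamma$ satisfies $\mathcal{P}'(0)=e^{T}>1$. Thus every periodic orbit of (\ref{eqex11}) at $\delta=1$ is a hyperbolic, unstable limit cycle.

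Finally I would derive a contradiction from the existence of such a $\Gamma$. The only finite critical points when $\delta=1$ are $(0,0)$, an unstable focus, and $(-1,0)$, which lies on $\{f=0\}$; hence $\{f<0\}$ contains no critical point, so $\Gamma\subset\{f>0\}$ and $\mathrm{int}\,\Gamma$ contains at most the focus $(0,0)$. Because $\Gamma$ is hyperbolic with $\mathcal{P}'(0)>1$, an orbit $\gamma$ started just inside $\Gamma$ recedes from $\Gamma$ at each revolution and remains in the compact, positively invariant set $\overline{\mathrm{int}\,\Gamma}$; thus $\omega(\gamma)$ lies in the open region $\mathrm{int}\,\Gamma$ and, by the Poincar\'e--Bendixson theorem, is a critical point, a graphic, or a periodic orbit. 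The first two possibilities are excluded, since the only critical point that could occur is a focus, which neither attracts another orbit nor supports a homoclinic graphic. So $\omega(\gamma)$ is a periodic orbit $\Gamma'\subsetneq\mathrm{int}\,\Gamma$ onto which $\gamma$ spirals from the outside; but then $\Gamma'$ attracts from outside, contradicting the fact just established that $\Gamma'$ is also a hyperbolic, unstable limit cycle. Hence no limit cycle exists when $\delta=1$. The step requiring the most care is this last topological argument: one must dispose of the $\omega$-limit alternatives cleanly and check that the orbit just inside $\Gamma$ genuinely moves away from it, which is exactly where $\mathcal{P}'(0)>1$ is used.
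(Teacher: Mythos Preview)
Your proof is correct and follows essentially the same route as the paper: verify that $1-x^2+2y=0$ is invariant with cofactor $2x$, use (\ref{eqcof}) to get $\int_0^T x\,dt=0$, and then compute $\int_0^T{\rm div}\,dt=T>0$ via (\ref{eqpp}) to force any periodic orbit to be hyperbolic and unstable. The only difference is in the closing contradiction: the paper simply takes the innermost limit cycle and observes that, with an unstable focus at the origin, it must attract from the inside, which already clashes with $\mathcal{P}'(0)>1$; you instead run a full Poincar\'e--Bendixson argument on an orbit just inside $\Gamma$, which is a slightly longer but equally valid way to reach the same contradiction.
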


\begin{proof}
When $\delta \, = \, 1$, the parabola $1-x^2+2y \, = \, 0$ is an invariant algebraic curve of system (\ref{eqex11}) with cofactor $2x$. Assume that there exists a limit cycle $\Gamma$, assume that $\Gamma$ is the closest limit cycle to the origin of coordinates and that it is parameterized using the independent variable $t$ by $\Gamma \, = \, \left\{ \gamma(t)=(\gamma_1(t), \gamma_2(t)) \, : \, t \in [0,T) \right\}$ where $T>0$ is the period of $\Gamma$. It needs to be stable since the origin is unstable. Indeed, $\Gamma$ cannot cut the parabola $1-x^2+2y \, = \, 0$ and, therefore, by (\ref{eqcof}) $\int_0^T  2 \, \gamma_1(t) \, dt \, = \, 0$. On the other hand, the divergence of system (\ref{eqex11}) with $\delta \, = \, 1$ is $\delta + 3x \, = \, 1 + 3x$. We get that
\[ \int_0^{T}  \left(1 + 3 \, \gamma_1(t)\right) \, dt \, = \, \int_0^{T}  1 \, dt \, = \, T \, > \, 0. \] This would give an unstable limit cycle from (\ref{eqpp}) which is a contradiction.  \end{proof}

\smallskip

The notion of (semi-)complete family of rotated vector fields, see for instance section 4.6 in \cite{Perko} or section 4.3 in \cite{ZhangZF}, provides in many cases the bifurcation diagram of the appearance and disappearance of a limit cycle in single parameter family of planar differential systems. Remark that the parametric family (\ref{eqex11}) is not a (semi-)complete family of rotated vector fields in $\delta$. However, as a consequence of the existence of the Poincar\'e--Bendixson regions that we have encountered for $\delta \in (0,1)$ together with the result in \cite{chen77} and Lemma \ref{lem1}, we can provide the following result.

\begin{proposition} \label{prop1}
System {\rm (\ref{eqex11})} has a unique limit cycle for $\delta \in (0,1)$. This limit cycle appears from a Hopf bifurcation when $\delta = 0$, its size changes with $\delta$ and it disappears in the parabola $1-x^2+2y \, = \, 0$ when $\delta = 1$.
\end{proposition}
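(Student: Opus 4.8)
The plan is to combine four facts: (a) for every $\delta\in(0,1)$ the two transversal ellipses $b_{0i}(\delta)$ and $b_{0e}(\delta)$ constructed above bound a Poincar\'e--Bendixson region, so Theorem~\ref{thpb} produces at least one limit cycle; (b) the uniqueness of that limit cycle, which is the content of \cite{chen77} specialized to $a=b=\ell=1$ in system~(\ref{eqex1}); (c) a Hopf analysis of~(\ref{eqex11}) at $\delta=0$; and (d) Lemma~\ref{lem1} for the disappearance at $\delta=1$, complemented by an argument showing the cycle is genuinely $\delta$-dependent.

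For (a) I would proceed as follows. The interior curve is transversal: substituting $b_{0i}(\delta)=-\delta/(\delta+1)$ in~(\ref{eqfi1}) gives $\varphi(m)=16(\delta-\delta^3)m^4/((\delta^2-1)m^2-1)^3$, which is $\le0$ for $0<\delta<1$, vanishes only at $m=0$, and has a denominator with no real zero; hence the curve is a transversal ellipse (a circle) bounding a disk that contains the origin, and since $\dot f\le0$ on it the flow leaves that disk. The exterior curve is transversal as well: inserting the piecewise rational $b_{0e}(\delta)$ in~(\ref{eqfi1}), one checks, separately on $(0,647/1000]$ and on $(647/1000,1)$, that the discriminants in $m$ of the numerator and of the denominator of $\varphi$ are negative --- this is precisely where the polynomials $Q_{12},Q_3$, resp.\ $Q_4,Q_{32},\tilde Q_4,Q_{11}$, enter and where Sturm sequences are used --- so that $\varphi$ has constant sign and the curve is again an ellipse; moreover, as required, that sign is opposite to the one on the interior circle. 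Consequently the flow enters, through both boundary components, the bounded annular region $R_\delta$ between the two ellipses. Finally $R_\delta$ has no critical point: for $\delta<1$ the only finite singularity of~(\ref{eqex11}) is the origin, which lies in the interior disk and hence outside $R_\delta$. Theorem~\ref{thpb} then gives an odd number of limit cycles (with multiplicity) inside $R_\delta$, in particular at least one; and by index theory any such cycle, being trapped in the annulus $R_\delta$ whose only ``hole'' contains the origin, must wind around that hole, i.e.\ encircle the origin.

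For (b), \cite{chen77} establishes that for $\delta\in(0,\delta^*)$ the system~(\ref{eqex1}) with $a=b=\ell=1$ has exactly one limit cycle, $\delta^*$ being the value at which it disappears; together with the existence in (a) for every $\delta\in(0,1)$ this forces $\delta^*\ge1$, and Lemma~\ref{lem1} forces $\delta^*\le1$, so $\delta^*=1$ and the cycle is unique for $\delta\in(0,1)$. For (c), the linear part of~(\ref{eqex11}) at the origin has trace $\delta$ and determinant $1$, so at $\delta=0$ its eigenvalues are $\pm i$, and the real part of the eigenvalues, which equals $\delta/2$ near $\delta=0$, has nonzero derivative $1/2$ there; a short computation of the first Lyapunov (focal) quantity at $\delta=0$ shows it to be negative --- equivalently, $\delta=0$ does not meet the Bautin center conditions for a quadratic system --- so the origin is there a stable fine focus and the bifurcation is a supercritical Hopf: a branch of stable limit cycles is born as $\delta$ crosses $0$, with amplitude tending to $0$ as $\delta\to0^+$. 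By the uniqueness in (b), this branch is exactly the family of cycles for $\delta\in(0,1)$. For (d), Lemma~\ref{lem1} gives the disappearance at $\delta=1$, and its mechanism is already visible: the cycle cannot cross the invariant parabola $1-x^2+2y=0$ (cofactor $2x$, so~(\ref{eqpp}) and the argument in the proof of Lemma~\ref{lem1} rule it out), this parabola is the common limit as $\delta\to1^-$ of the transversal ellipses $b_{0i}(\delta),b_{0e}(\delta)$ between which the cycle is trapped, and since those ellipses become arbitrarily elongated while the parabola is unbounded, the diameter of the cycle is forced to grow without bound as $\delta\to1^-$. Together with the amplitude tending to $0$ as $\delta\to0^+$, this shows the size of the cycle varies with $\delta$.

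The main obstacle --- already carried out in the text --- is the transversality of the \emph{exterior} ellipse uniformly in $\delta$: unlike for the interior circle there is no closed-form $\varphi(m)$, one must exhibit a piecewise rational $b_{0e}(\delta)$ lying inside the band of admissible outer ellipses (which narrows as $\delta\to1$) and then certify by Sturm sequences that the high-degree polynomials $Q_{12}(\delta),Q_{32}(\delta),Q_{11}(\delta),\dots$ arising from the discriminants of the numerator and denominator of $\varphi$ have no real root on the relevant $\delta$-subintervals --- heavy but mechanical symbolic work. The only other delicate point is the nondegeneracy of the Hopf bifurcation, i.e.\ confirming the first focal value at $\delta=0$ is nonzero so that the origin is not a center. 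Everything else --- the Poincar\'e--Bendixson step once transversality is established, the citation of \cite{chen77}, and the use of Lemma~\ref{lem1} --- is immediate.
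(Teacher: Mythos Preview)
Your proposal is correct and follows essentially the same route as the paper: existence via the Poincar\'e--Bendixson regions built from the transversal ellipses $b_{0i}(\delta),b_{0e}(\delta)$, uniqueness from \cite{chen77}, and disappearance at $\delta=1$ from Lemma~\ref{lem1}. The paper states Proposition~\ref{prop1} as an immediate consequence of these three ingredients without further argument; you supply the extra details --- the explicit Hopf nondegeneracy check at $\delta=0$ and the argument that the cycle's size actually varies --- that the paper leaves implicit.
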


Our method works for all the values of $\delta$ in which the limit cycle exists and provides the \emph{bifurcation value} $\delta \, = \, 1$.

\subsubsection{System {\rm (\ref{eqex1})} with $a \, =\,1/3$, $b\, =\, 1/5$ and $\ell\, =\, 1/7$. }

We consider the quadratic system (\ref{eqex1}) with $\displaystyle a \, = \, 1/3$, $\displaystyle b \, = \, 1/5$ and $\displaystyle \ell \, = \, 1/7$, that is
\begin{equation} \label{eqex12} \dot{x} \, = \, \delta x\, + \, \frac{1}{7}\, x^2\, -\, y, \quad \dot{y} \, = \, x \left( 1\, +\, \frac{1}{3}\, x\, +\, \frac{1}{5}\, y \right), \end{equation}
with $\delta>0$. We observe the following properties for system (\ref{eqex12}). For $\delta>0$, the origin is an unstable focus. Define $\delta_1 \, = \, \displaystyle 2 \sqrt{5/7}\, -\, 5/3 \, \approx \, 0.0236$. For $\delta \in (0,\delta_1)$, the origin is the only finite singular point. For $\delta>\delta_1$, there are two other singular points in the finite plane: a stable node and a saddle. As before, in \cite{chen77} it is proved that this system has a unique limit cycle for $\delta \in (0,\delta^*)$. Numerically, $\delta^{*} \, \approx \, 0.2465$.

\smallskip

Figures \ref{cuadraticod02} and \ref{cuadraticod03} represent the phase portrait of system (\ref{eqex12}) in the Poincar\'e disk, done with program P4 \cite{P4}, for $\delta \, = \, 0.2$ and $\delta \, = \, 0.3$. The bounding black circle of each figure represents the equator of the Poincar\'e disk and the area inside is the phase portrait in the real affine $(x,y)$-plane. Stable (resp. unstable) separatrices appear in blue (resp. red) lines. A green square represents a non-degenerate saddle, a blue (resp. red) square a non-degenerate stable (resp. unstable) node. A blue (resp. red) diamond represents a non-degenerate stable (resp. unstable) strong focus. A green triangle represents a semi-hyperbolic saddle and a blue (resp. red) triangle represents a semi-hyperbolic stable (resp. unstable) node. A black cross represents a non-elementary singular point.

\begin{figure}[htb]
\includegraphics[width=0.5\textwidth]{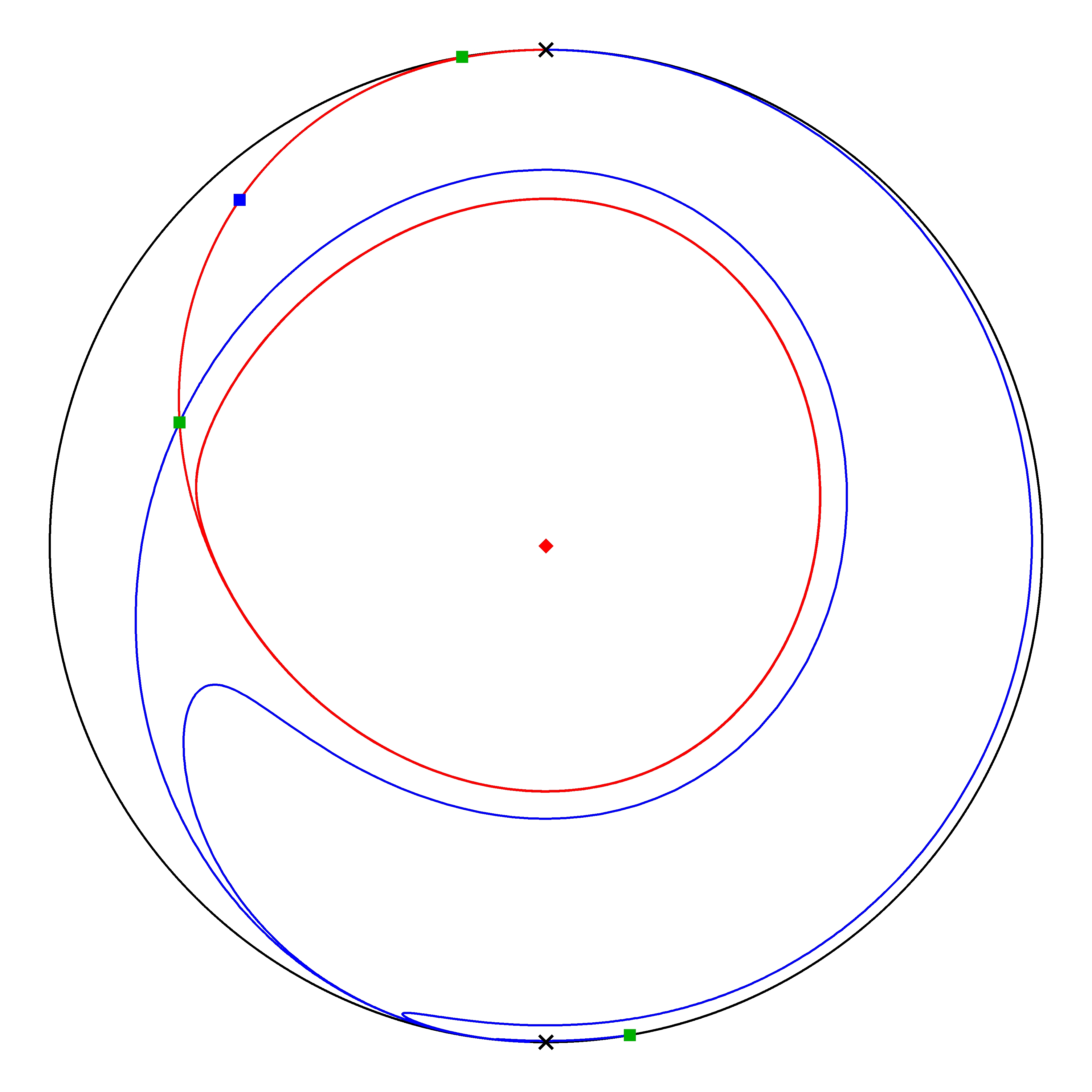}
\caption{Phase portrait in the Poincar\'e disk of system {\rm (\ref{eqex12})} for $\delta \, = \, 0.2$. There is a limit cycle surrounding the origin of coordinates.}
\label{cuadraticod02}
\end{figure}

\begin{figure}[htb]
\includegraphics[width=0.5\textwidth]{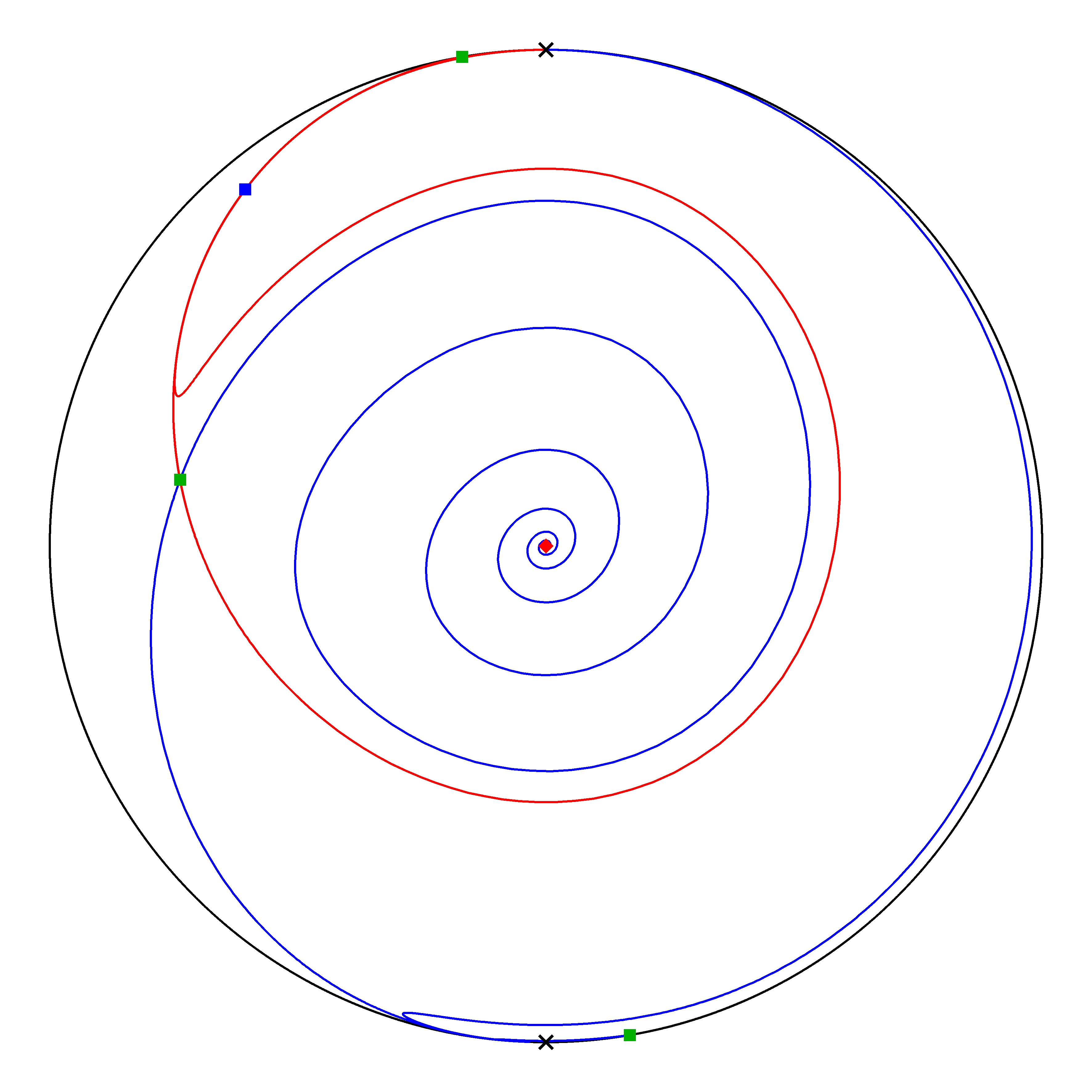}
\caption{Phase portrait in the Poincar\'e disk of system {\rm (\ref{eqex12})} for $\delta \, = \, 0.3$. There is no limit cycle but a non-oriented polycycle, surrounding the origin of coordinates.}
\label{cuadraticod03}
\end{figure}

We focus on the case in which there are three finite singular points, that is $\delta \, >\,  \delta_1$ in system (\ref{eqex12}).  By using the method described in section \ref{sect2}, we can find two transversal ellipses, one which only surrounds the unstable focus at the origin, and another one which surrounds the three singular points. These two transversal ellipses are crossed by the flow in opposite directions. Thus, by Poincar\'e--Bendixson theorem, we deduce the existence of at least one limit cycle or the existence of a non-oriented polycycle formed by the saddle whose unstable separatrices both connect with the stable node. We observe that we cannot conclude the existence of a limit cycle because inside the considered Poincar\'e--Bendixson region there are singular points. However, the existence of this Poincar\'e--Bendixson region also gives some information about the qualitative behavior of the flow of system (\ref{eqex12}).

\subsection{Example 2: van der Pol system\label{sect32}}

We consider the van der Pol system
\begin{equation} \label{eqex2} \dot{x} \, = \, y-\varepsilon \left( \frac{x^3}{3} -x \right), \quad \dot{y} \, = \, -x, \end{equation}
with $\varepsilon >0$.

The origin is the only finite critical point of the system and it is an unstable focus. It is known, see for instance \cite{Perko}, that system (\ref{eqex2}) has a unique stable and hyperbolic limit cycle for all $\varepsilon>0$ which bifurcates from the circle of radius $2$ when $\varepsilon = 0$ and which disappears to a slow-fast periodic limit set when $\varepsilon \to +\infty$. For each $\varepsilon>0$, we can find a Poincar\'e--Bendixson region bounded by two transversal ellipses, one in the interior of the region bounded by the limit cycle and the other one in its exterior. In this case we take as initial condition $(a_0,0)$ and a conic $f=0$ of the form $f(x,y)\, =\, 1 + s_3 x^2 + s_4 xy + s_5 y^2$ because the system is symmetric with respect to the origin. The expression of $f(x,y)$ that we obtain after the first and second steps described in section \ref{sect2} is
\[ f(x,y) \, = \, 1 \, -\, \frac{x^2}{a_0^2} \, - \, \frac{ 2 (3 - a_0^2) \varepsilon x y}{3 a_0^2} \, + \, \frac{(-9 - 9 \varepsilon^2 + a_0^4 \varepsilon^2) y^2}{9 a_0^2}. \]
The expression of the function $\varphi(m)$ defined in the third step of section \ref{sect2} is of the form \begin{equation} \label{eqfi2} \varphi(m) \, = \, m^2 \, \frac{P_1(m,a_0,\varepsilon)^2 P_4(m,a_0,\varepsilon)}{P_2(m,a_0,\varepsilon)^4}, \end{equation} where $P_1$, $P_4$ and $P_2$ are polynomials in $m$, $a_0$ and $\varepsilon$ of degrees $1$, $4$ and $2$, respectively, in $m$.
As we did in the previous example, we need to study the changes of sign of $\varphi(m)$ in order to prove that the curve $f=0$ is transversal to the flow of system (\ref{eqex2}). In this case, the discriminant with respect to $m$ of the polynomial $P_4$ defined in (\ref{eqfi2}) is of the form
\[ a_0^4(a_0^2-3)^2(\varepsilon^2 a_0^4-9\varepsilon^2-9)^5 Q_4(a_0,\varepsilon)^3 Q_6(a_0,\varepsilon) Q_{14}(a_0,\varepsilon)^2, \]
where $Q_j(a_0,\varepsilon)$ is a polynomial in $a_0$ and $\varepsilon$ of degree $j$ in $a_0$, $j=4,6,14$.

\smallskip

The interior ellipse is the circle of radius $\sqrt{3}$, for all $\varepsilon>0$. Thus, $a_{0i} \, = \, \sqrt{3}$. We denote by $a_{0e}$ the corresponding value for the exterior ellipse. Figures \ref{vanderPol_e1s10}, \ref{vanderPol_e1} and \ref{vanderPol_e13} represent the limit cycle (numerically found) and the transversal ellipses for $\varepsilon \, = \, 1/10$, $\varepsilon \, = \, 1$ and $\varepsilon \, = \, 13$ in the $(x,y)$-plane.

\begin{figure}[htb]
\includegraphics[width=0.5\textwidth]{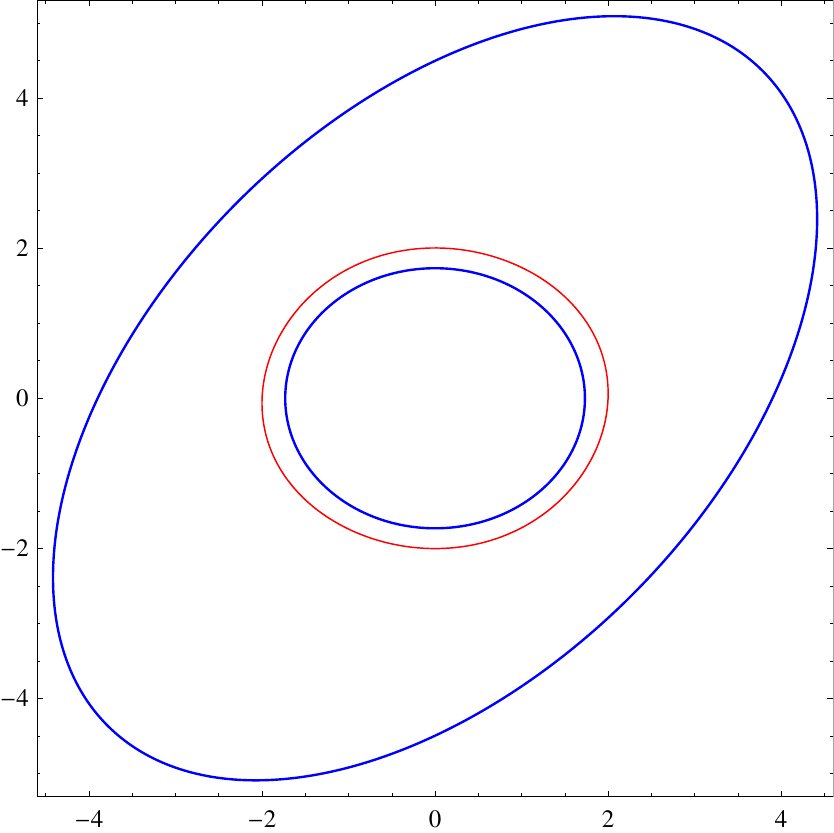}
\caption{Stable limit cycle in red and transversal ellipses in blue when $\varepsilon \, = \, 1/10$. The exterior ellipse corresponds to $a_{0e} \, = \, 39/10$.}
\label{vanderPol_e1s10}
\end{figure}

\begin{figure}[htb]
\includegraphics[width=0.5\textwidth]{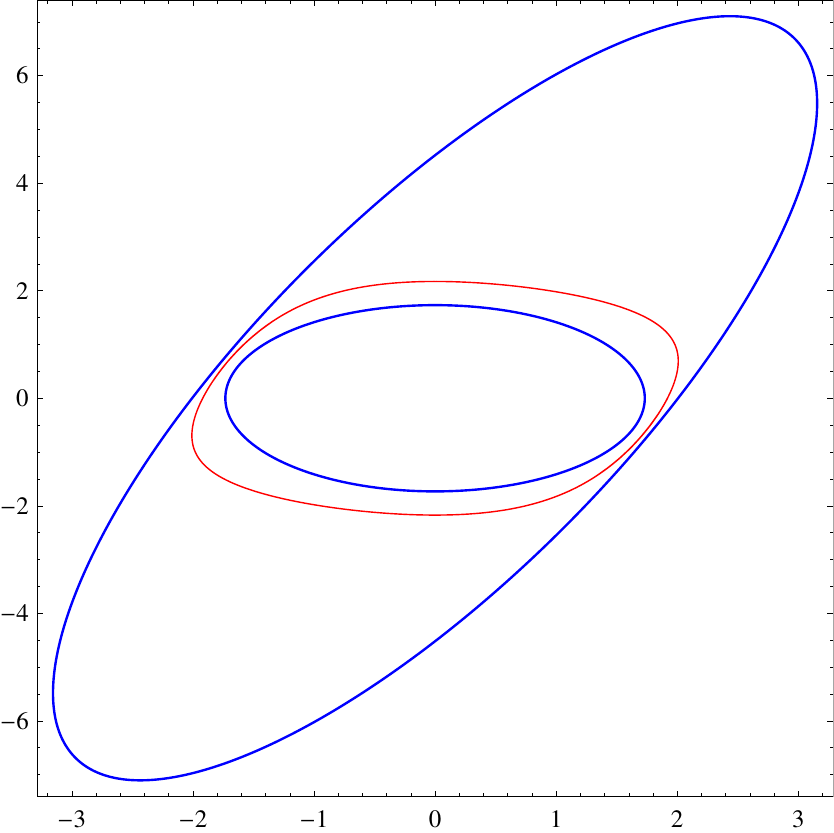}
\caption{Stable limit cycle in red and transversal ellipses in blue when $\varepsilon \, = \, 1$. The exterior ellipse corresponds to $a_{0e} \, = \, 2007/1000$.}
\label{vanderPol_e1}
\end{figure}

\begin{figure}[htb]
\includegraphics[width=0.5\textwidth]{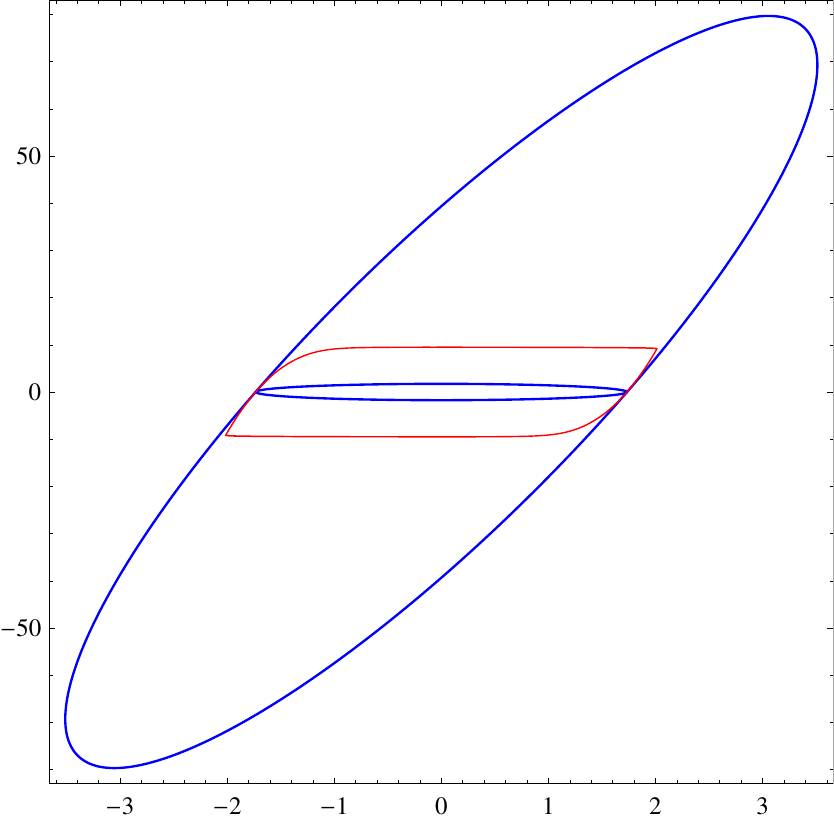}
\caption{Stable limit cycle in red and transversal ellipses in blue when $\varepsilon \, = \, 13$. The exterior ellipse corresponds to \[ a_{0e} \, = \, \displaystyle\frac{808+6960843825 \sqrt{3}}{6950604960}  \, = \, 1.734\ldots. \]
Note that $a_{0i} \, = \, \sqrt{3} \, = \, 1.732\ldots$.}
\label{vanderPol_e13}
\end{figure}

For system (\ref{eqex2}), there are several bands of transversal ellipses in the exterior of the region bounded by the limit cycle. We take the closest one to the origin as exterior ellipse of the Poincar\'e--Bendixson region. The size of the exterior ellipse grows to infinity when $\varepsilon \to 0$.
It does not bifurcate from the circle of radius $2$. \par For $\varepsilon \geq 8/5$, the cuts of the exterior ellipse and the limit cycle with the semiaxis $\{(x,0): x>0\}$ are very close. The exterior ellipse corresponds to
\[ a_{0e}(\varepsilon) \, = \, \left\{ \begin{array}{lll}
\displaystyle \left(\frac{6}{\varepsilon}\right)^{\frac{1}{3}} & \ & \mbox{if} \ 0<\varepsilon\leq 1/6,
\vspace{0.2cm} \\
\displaystyle 9/2 & \ & \mbox{if} \ 1/6<\varepsilon<8/5,
\vspace{0.2cm} \\
\displaystyle \sqrt{3} +\frac{\sqrt{3}}{4}\frac{1}{\varepsilon^2}-\frac{17}{32 \sqrt{3}}\frac{1}{\varepsilon^4}+\frac{1}{36\varepsilon^5} + \frac{1}{5\varepsilon^6}& \ & \mbox{if} \ 8/5\leq\varepsilon.
\end{array} \right. \]

We remark that to prove that the ellipse corresponding to the value
\begin{equation} \label{a0e} a_{0e}(\varepsilon) \, = \, \sqrt{3} +\frac{\sqrt{3}}{4}\frac{1}{\varepsilon^2}-\frac{17}{32 \sqrt{3}}\frac{1}{\varepsilon^4}+\frac{1}{36\varepsilon^5} + \frac{1}{5\varepsilon^6} \quad \mbox{if} \ 8/5\leq\varepsilon \end{equation}
is transversal, we need to apply Sturm algorithm to a polynomial of degree $71$. The latter expression of $a_{0e}(\varepsilon)$ comes from two roots of the resultant $R(a_0)$ of the numerator of $\varphi(m)$ and its derivative with $m$ as variable (see the fourth step in section \ref{sect2}). The values $a_0$ in the interval between these two roots of $R(a_0)$ give rise to a band of transversal exterior ellipses to the limit cycle. When $\varepsilon$ tends to infinity, these two roots coalesce. We study the asymptotic behavior of these two roots when $\varepsilon \to + \infty$ and we find the expression (\ref{a0e}) which belongs to the intermediate region between these two roots for any $\varepsilon \geq 8/5$. The most difficult case to show that $f=0$ is a transversal ellipse for the flow of system (\ref{eqex2}) is when $a_0=a_{0e}(\varepsilon)$ as given in (\ref{a0e}). Concretely, if we substitute this $a_{0e}(\varepsilon)$ in the expression of the polynomial $P_4$ defined in (\ref{eqfi2}), we get that the discriminant of this polynomial with respect to $m$ gives $H_{10}(\varepsilon) H_{20}(\varepsilon) \tilde{H}_{20}(\varepsilon) H_{32}(\varepsilon) H_{71}(\varepsilon)$ where $H_j$ is a polynomial in $\varepsilon$ of degree $j$, $j=10,20,32,71$, and $\tilde{H}_{20}$ is a polynomial in $\varepsilon$ of degree $20$, all of them with no real roots in the interval $\varepsilon \in [8/5,+\infty)$. We recall that in order to prove that a polynomial has no real roots on a certain interval we use Sturm sequences. To be more precise, consider the bivariate polynomial $p_4(m,\varepsilon) \, := \, P_4(m, a_{0e}(\varepsilon),\varepsilon)$. We want to show that $p_4(m,\varepsilon) \neq 0$ for all $m \in \mathbb{R}$ and for all $\varepsilon \geq 8/5$. The resultant with respect to $m$ of $p_4(m,\varepsilon)$ and its derivative with respect to $m$ is a polynomial in $\varepsilon$ which we denote by $\tilde{R}(\varepsilon)$. It is well-known, see for instance Chapter 1 of \cite{Walker}, that if a fixed $\varepsilon^{*}$ is such that $\tilde{R}(\varepsilon^{*})=0$ then either the polynomial $p_4(m,\varepsilon^{*})$ has a multiple root or $\varepsilon^{*}$ is a root of the coefficient of the monomial of highest degree in $m$ of $p_4(m,\varepsilon)$. As we have shown, in our case, the polynomial $R(\varepsilon) \neq 0$ for all $\varepsilon \geq 8/5$ and we can also check that for one fixed value $\tilde{\varepsilon} \geq 8/5$ we have that $p_4(m,\tilde{\varepsilon}) \neq 0$ for all $m \in \mathbb{R}$. Then, we conclude that $p_4(m,\varepsilon) \neq 0$ for all $m \in \mathbb{R}$ and $\varepsilon \geq 8/5$.

\smallskip

From the values of $a_{0i} \, = \, \sqrt{3}$ and $a_{0e}$ we get the following lemma.
\begin{lemma} \label{lem2}
For $\varepsilon \geq 8/5$, the cut of the limit cycle with the semiaxis $\{(x,0): x>0\}$ belongs to the interval
\[ \left(\sqrt{3}, \ \sqrt{3} +\frac{\sqrt{3}}{4}\frac{1}{\varepsilon^2}-\frac{17}{32 \sqrt{3}}\frac{1}{\varepsilon^4}+\frac{1}{36\varepsilon^5} + \frac{1}{5\varepsilon^6}\right), \] whose range tends to $0$ when $\varepsilon \to +\infty$.
\end{lemma}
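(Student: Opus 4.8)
The plan is to deduce the statement directly from the two transversal ellipses constructed above, together with the classical fact recalled earlier that system (\ref{eqex2}) has exactly one limit cycle for every $\varepsilon>0$. No further analysis of the vector field is needed, since the delicate transversality estimates (via resultants and Sturm sequences) were already carried out before the statement.

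First I would pin down where the two ellipses meet the half-line $\{(x,0):x>0\}$. Substituting $a_0=a_{0i}=\sqrt3$ into the displayed expression for $f(x,y)$ makes the coefficient $3-a_0^2$ of $xy$ vanish and collapses the $y^2$-coefficient to $-1/3$, so the inner conic is exactly the circle $x^2+y^2=3$; hence it meets the half-line only at $x=\sqrt3$. The outer conic, with $a_0=a_{0e}(\varepsilon)$ as in (\ref{a0e}), satisfies $f(a_0,0)=0$, because by the first and second steps of section \ref{sect2} the conic $f=0$ is glued to the local solution issuing from $(a_0,0)$; being an ellipse, hence convex, it meets the half-line only at $x=a_{0e}(\varepsilon)$. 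Since these two ellipses bound a Poincar\'e--Bendixson region with the circle as the inner boundary, the outer ellipse strictly contains it, so in particular $a_{0e}(\varepsilon)>\sqrt3$ for $\varepsilon\ge 8/5$.

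Next I would apply Theorem \ref{thpb} to the annular region $R$ bounded by the two ellipses. It contains no critical point: the only finite critical point of (\ref{eqex2}) is the origin, which lies inside the circle $x^2+y^2=3$ and hence not in $R$; and the flow crosses both boundary curves towards the interior of $R$, since the corresponding functions $\varphi(m)$ have opposite signs (sixth step of section \ref{sect2}). Thus $R$ is a Poincar\'e--Bendixson region, so it contains an odd number of limit cycles counted with multiplicity; as (\ref{eqex2}) has a unique hyperbolic limit cycle $\Gamma$, that limit cycle lies inside $R$.

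Finally I would localise $\Gamma\cap\{(x,0):x>0\}$. Because the circle is strictly inside the outer ellipse and the origin is inside the circle, the trace of $R$ on the positive $x$-axis is exactly the open interval $(\sqrt3,\,a_{0e}(\varepsilon))$, the endpoints being excluded because the boundary curves are transversal and so $\Gamma$ cannot touch them. Since $\Gamma$ encircles the origin (every limit cycle of (\ref{eq1}) surrounds a critical point by index theory) it must cross the positive $x$-axis, and — noting that this half-line is itself a transversal section, since $\dot y=-x<0$ there — it does so exactly once, at a point with abscissa in $(\sqrt3,\,a_{0e}(\varepsilon))$. The last assertion is immediate: from (\ref{a0e}), $a_{0e}(\varepsilon)-\sqrt3=\frac{\sqrt3}{4}\varepsilon^{-2}-\frac{17}{32\sqrt3}\varepsilon^{-4}+\frac1{36}\varepsilon^{-5}+\frac15\varepsilon^{-6}\to 0$ as $\varepsilon\to+\infty$. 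I do not expect any serious obstacle here: essentially all the analytic content is already in place, and the only point needing a little care is the elementary nesting/convexity argument identifying $R\cap\{(x,0):x>0\}$ with $(\sqrt3,\,a_{0e}(\varepsilon))$ and confirming that $\Gamma$ actually meets that half-line.
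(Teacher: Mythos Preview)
Your proposal is correct and follows exactly the route the paper takes: the lemma is stated immediately after the sentence ``From the values of $a_{0i}=\sqrt{3}$ and $a_{0e}$ we get the following lemma'', so the intended proof is precisely the deduction you spell out---the limit cycle sits in the Poincar\'e--Bendixson annulus between the two transversal ellipses, and the intersection of that annulus with the positive $x$-axis is $(\sqrt3,a_{0e}(\varepsilon))$. Your added observations (that $f(a_0,0)=0$ follows from the construction, that $\dot y=-x<0$ makes the half-line a transversal section so the crossing is unique, and the explicit limit as $\varepsilon\to\infty$) are all correct and merely make explicit what the paper leaves implicit.
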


We see that the transversal ellipses give a good approximation of the cut of the limit cycle of system (\ref{eqex2}) with the semiaxis $\{(x,0): x>0\}$ for big $\varepsilon$. However, we do not obtain a good approximation of the amplitude of the limit cycle, which is the greatest value of the $x$-coordinate in the limit cycle, for an arbitrary $\varepsilon>0$ because the transversal exterior ellipse tends to infinity when $\varepsilon \to 0^{+}$. The study of the amplitude of the limit cycle of system (\ref{eqex2}) depending on $\varepsilon$ is studied in \cite{Odani97}.

\subsection{Example 3: Rychkov system\label{sect33}}

We consider the system studied by Rychkov in 1975, see \cite{Rychkov75}.
\begin{equation} \label{eqex3} \dot{x} \, = \, y-\left( x^5-\mu x^3 + \delta x \right), \quad \dot{y} \, = \, -x, \end{equation} with $\delta, \mu \in \mathbb{R}$. This system is studied in \cite{Alsholm92, GiaNeu98, Odani96, Rychkov75}. The following features of system (\ref{eqex3}) can be found in the aforementioned references. The origin is the only finite singular point and it is a focus. Rychkov \cite{Rychkov75} proved that it has at most two limit cycles and that for $\delta<0$ there exists a unique limit cycle, which is stable. The line $\delta=0$ is a curve of occurrence of Hopf bifurcation. When $\delta>0$ there is a curve of bifurcation values $\mu \, = \, \mu^{*}(\delta)$ of a sky-blue bifurcation. Odani \cite{Odani96} proved that if $\delta>0$ and $\mu > \sqrt{5\delta}$, then the system has two limit cycles. Figure \ref{Rychov_bif} represents the bifurcation diagram of the Rychkov system (\ref{eqex3}) in the $(\delta,\mu)$-plane. Recall that a sky-blue bifurcation occurs when a stable limit cycle and an unstable limit cycle coalesce and become a semi-stable limit cycle. In a neighborhood of the values of the parameters where such a sky-blue bifurcation is exhibited and in a neighborhood of the semi-stable limit cycle, there are either no limit cycles, a semi-stable limit cycle or a stable and unstable couple of limit cycles. A sky-blue bifurcation corresponds to an elementary catastrophe of fold type.

\begin{figure}[htb]
\includegraphics[width=0.5\textwidth]{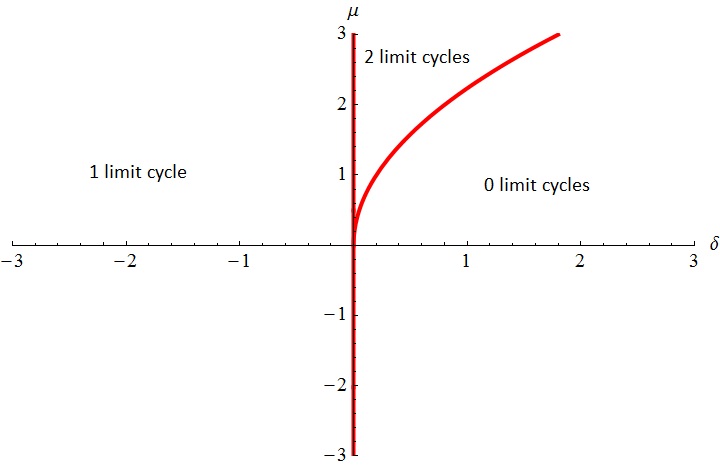}
\caption{Bifurcation diagram of the Rychkov system (\ref{eqex3}).}
\label{Rychov_bif}
\end{figure}

We fix $\mu=1$, that is we take
\begin{equation} \label{eqex31} \dot{x} \, = \, y-\left( x^5- x^3 + \delta x \right), \quad \dot{y} \, = \, -x, \end{equation} where $\delta \in \mathbb{R}$. We note that the system is a semi-complete family of rotated vector fields with respect to $\delta$. Numerically, the sky-blue bifurcation value is $\delta^{*} \, \approx \, 0.225$ and the bound given by Odani gives $\delta^{*}>0.2$. We choose different values of $\delta$ and we use the method described in section \ref{sect2} in order to find, when possible, Poincar\'e--Bendixson regions which locate the limit cycle $(\delta<0)$ or the two limit cycles $(0<\delta<\delta^{*})$.

\smallskip

In this case we take as initial condition $(a_0,0)$ and a conic $f=0$ of the form $f(x,y)\, =\, 1 + s_3 x^2 + s_4 xy + s_5 y^2$ because the system is symmetric with respect to the origin. After the first and second steps described in section \ref{sect2}, we get
\[ \begin{array}{lll}
f(x,y) & = & \displaystyle 1 \, -\,  \frac{x^2}{a_0^2} \, +\, \frac{2(-a_0^2+a_0^4+\delta)xy}{a_0^2}
\vspace{0.2cm} \\
& & \displaystyle
 + \, \frac{(-1+a_0^4-4a_0^6+3a_0^8+2a_0^4\delta-\delta^2)y^2}{a_0^2}.
\end{array} \]

When $\delta \, = \, -1$, we find a Poincar\'e--Bendixson region bounded by two transversal ellipses. Figure \ref{casoRychkov_dm1} represents the stable limit cycle (numerically found) in red and the two transversal ellipses in blue, in the $(x,y)$-plane.

\begin{figure}[htb]
\includegraphics[width=0.5\textwidth]{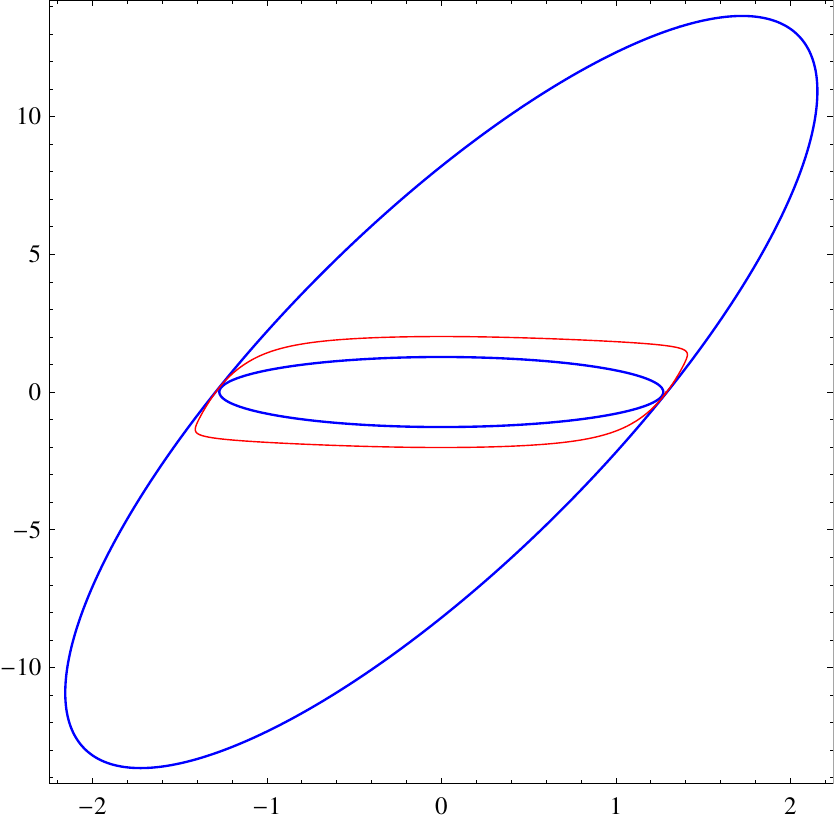}
\caption{Stable limit cycle in red and transversal ellipses in blue when $\delta \, = \, -1$. \[ a_{0i} \, = \, \frac{159}{125} \, = \, 1.272, \quad a_{0e} \, = \, \frac{646747}{500000} \, \approx \, 1.293.\]}
\label{casoRychkov_dm1}
\end{figure}

When $\delta \, = \, 1/10$, $\delta \, = \, 15/100$ or $\delta\, = \, 18/100$, we find two Poincar\'e--Bendixson regions bounded by three transversal ellipses, which separate the two limit cycles. Figures \ref{casoRychkov_d1s10}, \ref{casoRychkov_d15s100} and \ref{casoRychkov_d18s100} represent each of these cases, respectively. The stable limit cycle is represented in red and the unstable limit cycle in green. These limit cycles are numerically found. In each figure we represent the three transversal ellipses in blue: $a_{0i}$, $a_{0m}$ and $a_{0e}$ are the values corresponding to the interior, in the middle and exterior, respectively, ellipses bounding the Poincar\'e--Bendixson regions. The following table contains the values of $a_{0i}$, $a_{0m}$ and $a_{0e}$ for each value of $\delta \, = \, 1/10$, $\delta \, = \, 15/100$ and $\delta\, = \, 18/100$.

\[ \begin{array}{c|c|c|c}
\delta & a_{0i} & a_{0m} & a_{0e}  \vspace{0.2cm} \\ \displaystyle \frac{1}{10} & \displaystyle \frac{67}{200} \, = \, 0.335 & \displaystyle  \frac{937}{1000} \, = \, 0.937 & \displaystyle \frac{1081}{1000} \, = \, 1.081 \vspace{0.2cm} \\ \displaystyle \frac{15}{100} & \displaystyle \frac{2143}{5000} \, \approx \, 0.429 & \displaystyle \frac{22097}{25000} \, \approx \, 0.884 & \displaystyle \frac{10687}{10000} \, \approx \, 1.069 \vspace{0.2cm} \\ \displaystyle \frac{18}{100} & \displaystyle \frac{1213}{2500} \, \approx \, 0.485 &
\displaystyle \frac{80337}{100000} \, \approx \, 0.803 & \displaystyle \frac{26529}{25000} \, \approx \, 1.061
\end{array} \]

\begin{figure}[htb]
\includegraphics[width=0.5\textwidth]{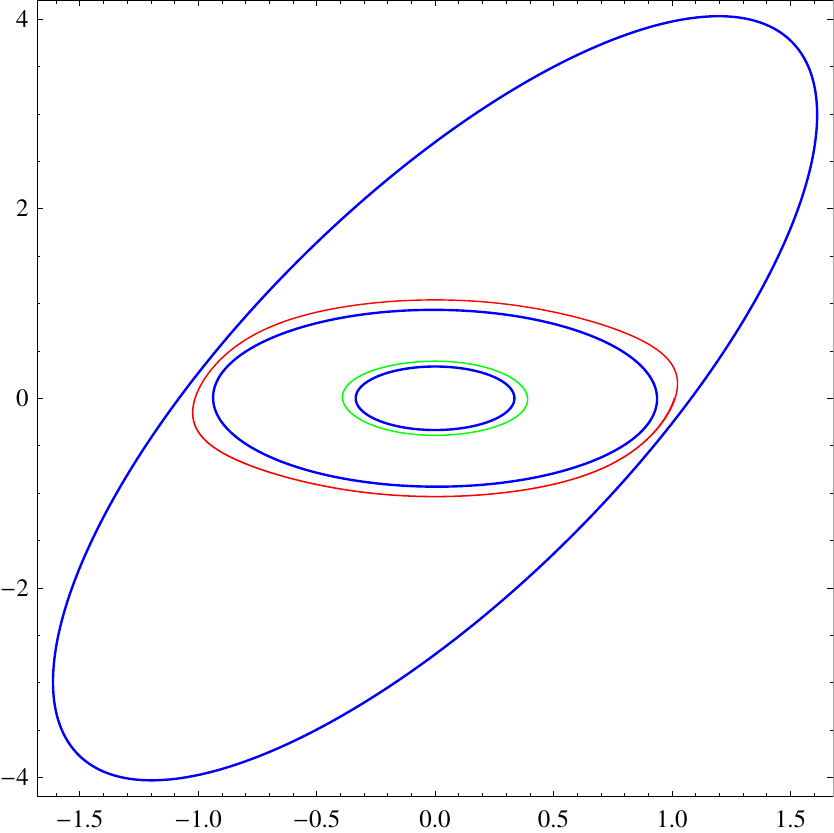}
\caption{Rychkov system {\rm (\ref{eqex31})} when $\delta \, = \, 1/10$. Transversal ellipses in blue. Stable limit cycle in red and unstable limit cycle in green.}
\label{casoRychkov_d1s10}
\end{figure}

\begin{figure}[htb]
\includegraphics[width=0.5\textwidth]{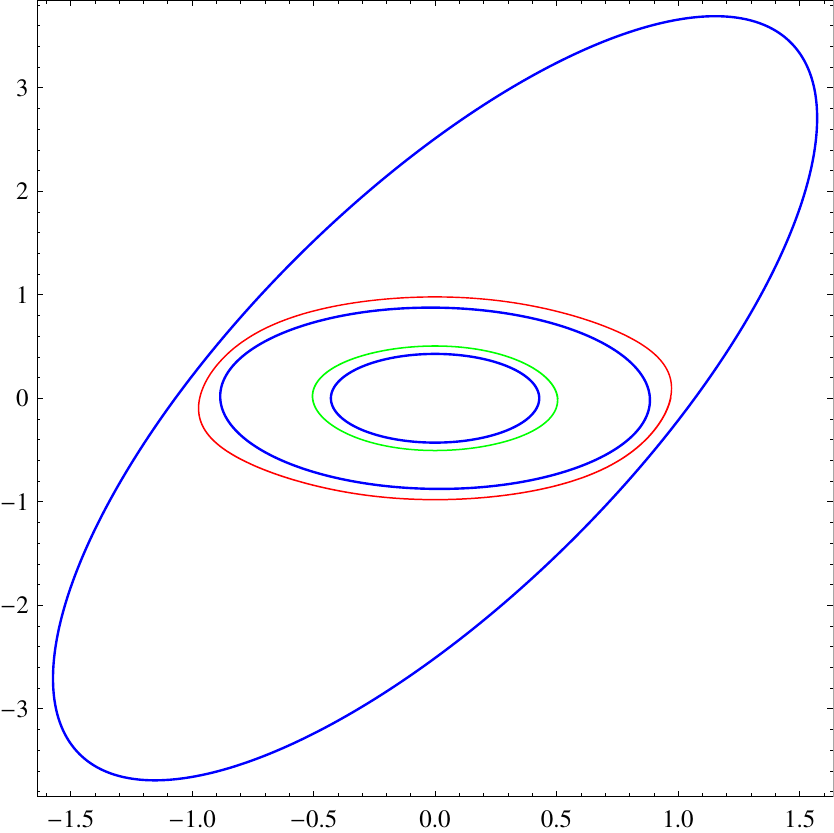}
\caption{Rychkov system {\rm (\ref{eqex31})} when $\delta \, = \, 15/100$. Transversal ellipses in blue. Stable limit cycle in red and unstable limit cycle in green.}
\label{casoRychkov_d15s100}
\end{figure}

\begin{figure}[htb]
\includegraphics[width=0.5\textwidth]{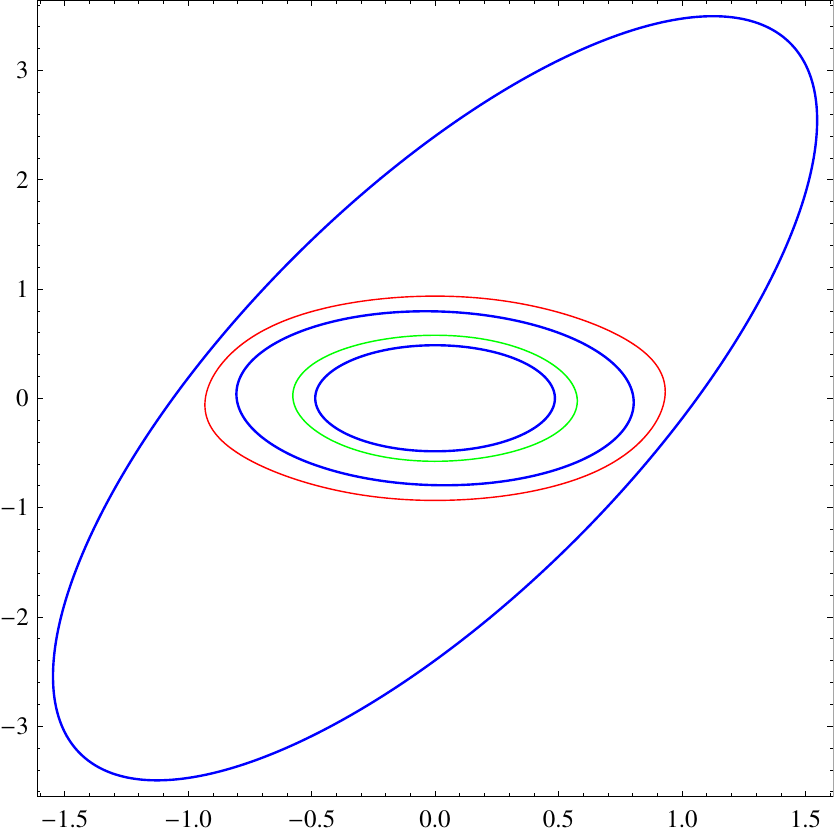}
\caption{Rychkov system {\rm (\ref{eqex31})} when $\delta \, = \, 18/100$. Transversal ellipses in blue. Stable limit cycle in red and unstable limit cycle in green.}
\label{casoRychkov_d18s100}
\end{figure}

Using the same method, we can find the two Poincar\'e--Bendixson regions bounded by three transversal ellipses, which separate the two limit cycles, for $\delta=0.1815$. We can cover all the interval $\delta \in (0, 0.1815)$ because we have found the three transversal ellipses for $\delta=0.1815$. Since system (\ref{eqex31}) is a semi-complete family of rotated vector fields with respect to $\delta$ we deduce the existence of these two limit cycles for values of $\delta$ lower than $0.1815$. Recall that the bifurcation value is $\delta^{*} \, \approx \, 0.225$ and the bound given by Odani gives $\delta^{*}>0.2$. Taking as initial condition $(a_0,b_0)$, with $b_0=-1/26$ and $a_0 \neq 0$, we get three transversal ellipses, which separate the two limit cycles, for $\delta \in (0,0.19991)$. It turns out that two of the transversal ellipses annihilate one each other when $\delta=0.19991$. We could say that {\em there is a sky-blue bifurcation of transversal ellipses} before the sky-blue bifurcation of limit cycles. The ellipses that coalesce when $\delta=0.19991$ belong to the band inside the region bounded by the two limit cycles. We remark that we have not been able to give a close bound for the sky-blue bifurcation of limit cycles. This may be due to the fact that when the limit cycles are very close, the intermediate region is too narrow to contain a transversal conic.

\subsection{Example 4: A quasi-homogeneous system\label{sect34}}

We consider the system studied in \cite{GasGia06}
\begin{equation} \label{eqex4} \dot{x} \, = \, y, \quad \dot{y} \, = \, -x^3+\delta x^2 y+y^3, \end{equation} with $\delta \in \mathbb{R}$. In \cite{CGM97} it is proved that the origin is the only finite singular point and it is a focus which is stable when $\delta<0$ and unstable when $\delta \geq 0$. Indeed, in \cite{CGM97} it is also proved that for $\delta <0$ and close to zero, there exists at least one limit cycle born by Hopf bifurcation from the bifurcation value $\delta=0$. In \cite{GasGia06} it is proved that for $\delta \geq 0$ there are no limit cycles using Bendixson criterion; for $\delta <-2.679$ there are no limit cycles and for $-\sqrt[3]{27/2} < \delta <0$, there is at most one limit cycle. Note that $-\sqrt[3]{27/2} \approx -2.381$. Numerically, the bifurcation value is $\delta^{*} \, \approx \, -2.198$. We note that system (\ref{eqex4}) is a semi-complete family of rotated vector fields with respect to $\delta$. It is known that for $\delta \approx 0$, $\delta <0$ there exists a limit cycle born by Hopf bifurcation when $\delta =0$ but an open problem was to prove its existence for $\delta \in (\delta^{*},0)$. We partially solve this problem for a real interval contained in $(\delta^{*},0)$.

\begin{lemma} \label{lem3}
For  $\displaystyle 0>\delta > -3\sqrt[3]{-9+6\sqrt{3}}/\sqrt[3]{4} \, \approx \, -2.1103, $ we can find a Poincar\'e--Bendixson region for system {\rm (\ref{eqex4})} bounded by the origin (which is stable) and a transversal hyperbola together with part of the equator of the Poincar\'e disk.
\end{lemma}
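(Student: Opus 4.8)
The plan is to run the six-step procedure of Section~\ref{sect2}. Since system~(\ref{eqex4}) is invariant under $(x,y)\mapsto(-x,-y)$, I would work with the symmetric conic $f(x,y)=1+s_3x^2+s_4xy+s_5y^2$ and an initial condition of the form $(a_0,0)$. First I would compute the truncated power-series solution $(\tilde x(t),\tilde y(t))$ through $(a_0,0)$: the relation $\dot x=y$ gives $a_1=0$ and $a_{i+1}=b_i/(i+1)$, while $\dot y=-x^3+\delta x^2y+y^3$ determines the coefficients $b_i$ recursively, starting from $b_1=-a_0^3$. Requiring the Taylor series of $f(\tilde x(t),\tilde y(t))$ at $t=0$ to vanish to the highest order the three free parameters allow fixes $s_3,s_4,s_5$ as explicit rational functions of $a_0$ and $\delta$ (in particular $s_3=-1/a_0^2$, so that $(a_0,0)$ lies on the conic), producing a one-parameter family $\{f=0\}$ depending on $a_0$.

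Next, following the third step, I would take the rational parametrization $\pi(m)$ of $\{f=0\}$ given by the pencil of lines through $(a_0,0)$ and form the rational function
\[
\varphi(m)\,=\,P(\pi(m))\,\frac{\partial f}{\partial x}(\pi(m))\,+\,Q(\pi(m))\,\frac{\partial f}{\partial y}(\pi(m)).
\]
The type of the conic is read off from the denominator of $\pi(m)$: I expect it to have two simple real roots, so that $\{f=0\}$ is a hyperbola whose two points at infinity are the images of those roots. Transversality is then equivalent to the numerator of $\varphi(m)$ having constant sign on the parameter domain. Carrying out the fourth and fifth steps, I would compute the resultant $R(a_0)$ of this numerator with its derivative in $m$, identify the root $\tilde a_0=\tilde a_0(\delta)$ sitting at the edge of a nonempty band of transversal hyperbolas, and select a value $a_0=a_0(\delta)$ inside that band (a rational approximant of $\tilde a_0(\delta)$, or a closed-form expression lying in the band's interior). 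For a fixed $\delta$, transversality is certified by a Sturm sequence applied to the numerator of $\varphi(m)$; to cover the whole interval uniformly I would instead show that the discriminant with respect to $m$ of that numerator, viewed as a polynomial in $\delta$, has no real zero on $(-3\sqrt[3]{-9+6\sqrt{3}}/\sqrt[3]{4}\,,\,0)$. The endpoint $\delta=-3\sqrt[3]{-9+6\sqrt{3}}/\sqrt[3]{4}\approx-2.1103$ is precisely the value at which this band collapses — where the numerator of $\varphi$ acquires a genuine sign change (or the conic stops being a hyperbola) — which is the source of the bound in the statement.

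Finally, for the sixth step I would assemble the region in the Poincar\'e disk. The two branches of the transversal hyperbola meet the equator at the points at infinity of the conic; joining these endpoints by the appropriate arcs of the equator produces a simple closed curve. Using the Poincar\'e compactification of~(\ref{eqex4}), I would locate and classify the singular points at infinity and check that the compactified flow crosses the selected equator arcs in the same sense as it crosses the hyperbola, so that this curve is a legitimate outer boundary. The only finite critical point is the origin, which lies off the curve; by~\cite{CGM97} it is a stable focus for $\delta<0$, so after reversing time it becomes a source, an arbitrarily small circle around it is crossed inward by the reversed flow, and the hyperbola-plus-equator curve is then crossed inward as well. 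Hence the region bounded by the origin, the transversal hyperbola and that part of the equator is a Poincar\'e--Bendixson region, as claimed (and, via Theorem~\ref{thpb} applied to the time-reversed system, it contains a limit cycle of~(\ref{eqex4})).

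The step I expect to be the main obstacle is the fifth one: selecting the correct branch $\tilde a_0(\delta)$ of $R(a_0)=0$ and proving, uniformly in $\delta$ over the whole interval, that the associated numerator of $\varphi(m)$ does not change sign — this is exactly the ``sharp'', parameter-dependent Sturm/discriminant analysis flagged in Section~\ref{sect2}, and pinning down the precise endpoint $-3\sqrt[3]{-9+6\sqrt{3}}/\sqrt[3]{4}$ is its most delicate part. A secondary difficulty is the topological bookkeeping on the equator of the Poincar\'e disk: classifying the infinite singular points of~(\ref{eqex4}) and verifying that the flow along the chosen equator arcs has the transversal direction compatible with the hyperbola, so that the two branches and the arcs genuinely enclose a region whose inner extent reaches the origin.
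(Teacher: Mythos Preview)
Your plan has a concrete error at the very first choice: the initial condition $(a_0,0)$ does not produce a hyperbola for system~(\ref{eqex4}). Running your own first two steps, one finds $a_1=0$, $b_1=-a_0^3$, and then the three gluing conditions force
\[
s_3=-\frac{1}{a_0^2},\qquad s_4=0,\qquad s_5=-\frac{1}{a_0^4},
\]
so the resulting conic is the \emph{ellipse} $x^2/a_0^2+y^2/a_0^4=1$ (its quadratic-form discriminant $s_4^2-4s_3s_5=-4/a_0^6$ is negative). The paper in fact records that no transversal ellipses were found for this system, so your pipeline would stall at step three: the denominator of $\pi(m)$ has no real roots and the subsequent $\varphi(m)$ analysis cannot yield a transversal curve. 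The paper instead glues at $(0,b_0)$, which gives
\[
f(x,y)=1+b_0^2x^2+2xy-\frac{y^2}{b_0^2},
\]
a genuine hyperbola (discriminant $s_4^2-4s_3s_5=8>0$), and then selects $b_0$ as the smallest positive root of $1-4b_0^3+b_0^6-b_0^2\delta$.

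There is also a misdiagnosis of where the endpoint $\delta_i=-3\sqrt[3]{-9+6\sqrt{3}}/\sqrt[3]{4}$ comes from. You attribute it to the transversality band of the conic collapsing, but in the paper the decisive obstruction is at infinity: for $\delta<-3/\sqrt[3]{4}$ a semi\-hyperbolic \emph{unstable node} $(u_2,0)$ appears on the equator, and the construction works only while this source stays outside the region bounded by the hyperbola, i.e.\ while $\tilde f(u_2,0)<0$ in the chart $(u,v)=(x/y,1/y)$. The value $\delta_i$ is exactly the threshold at which that inequality is lost (simultaneously, the transversal hyperbola ceases to exist). Your ``topological bookkeeping'' paragraph touches the right objects but treats the equator merely as a matter of flow direction along arcs; the actual argument requires classifying the infinite singular points (stable node, saddles, and the troublesome unstable node) and checking their position relative to $\tilde f=0$, which is what pins down the bound.
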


This lemma is proved by the arguments contained in this subsection. We use the method described in section \ref{sect2} and in this case we take the initial condition $(0,b_0)$ and a conic $f=0$ of the form $f(x,y) \, = \, 1 + s_3 x^2 + s_4 xy + s_5 y^2$. The transversal hyperbola $f=0$ is of the form \begin{equation} \label{eqfex4} f(x,y) \, = \, 1 + b_0^2 x^2+ 2xy-\frac{y^2}{b_0^2}, \end{equation} with $b_0>0$ the lowest root of the polynomial $1-4b_0^3+b_0^6-b_0^2 \delta$. \par We have tried to find transversal conics $f=0$ of the form $f(x,y) \, = \, 1 + s_3 x^2 + s_4 xy + s_5 y^2 $ and of the form $ f(x,y) \, = \, 1 +  s_1 x + s_2 y+ s_3 x^2 + s_4 xy + s_5 y^2  $ with initial condition $(a_0,b_0)$ in order to find transversal ellipses but we have not succeeded.

\smallskip

Figure \ref{p4pacificdm2} represents the phase portrait of system (\ref{eqex4}) for $\delta = -2$ and it shows the limit cycle and the equator of the Poincar\'e disk, done with program P4. Figure \ref{casoPacific_dm2} represents the finite part of the Poincar\'e--Bendixson region in the $(x,y)$-plane, also for $\delta = -2$. The unstable limit cycle is represented in green and the transversal hyperbola in blue.

\begin{figure}[htb]
\includegraphics[width=0.5\textwidth]{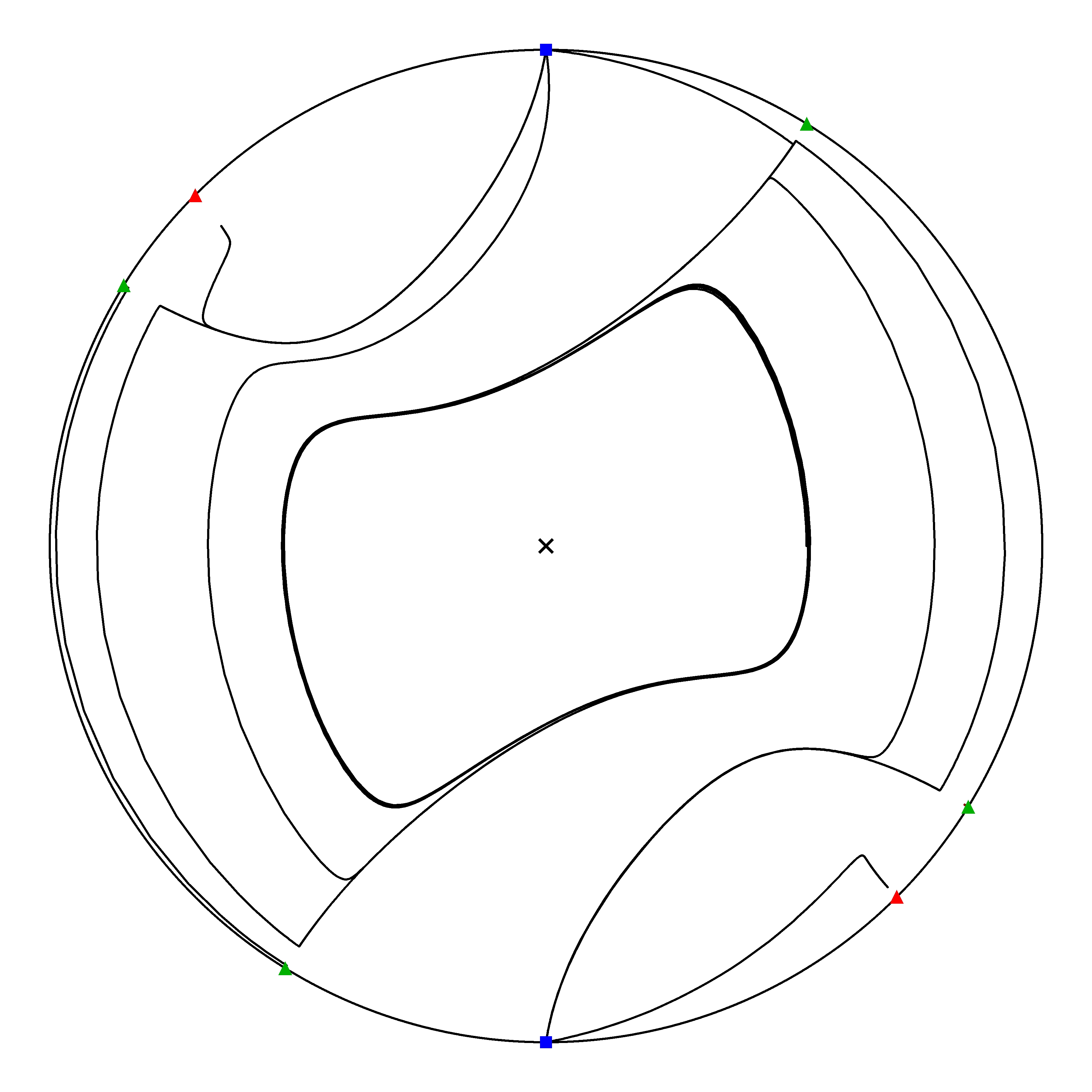}
\caption{Phase-portrait in the Poincar\'e disk of system {\rm (\ref{eqex4})} when $\delta \, = \, -2$.}
\label{p4pacificdm2}
\end{figure}

\begin{figure}[htb]
\includegraphics[width=0.5\textwidth]{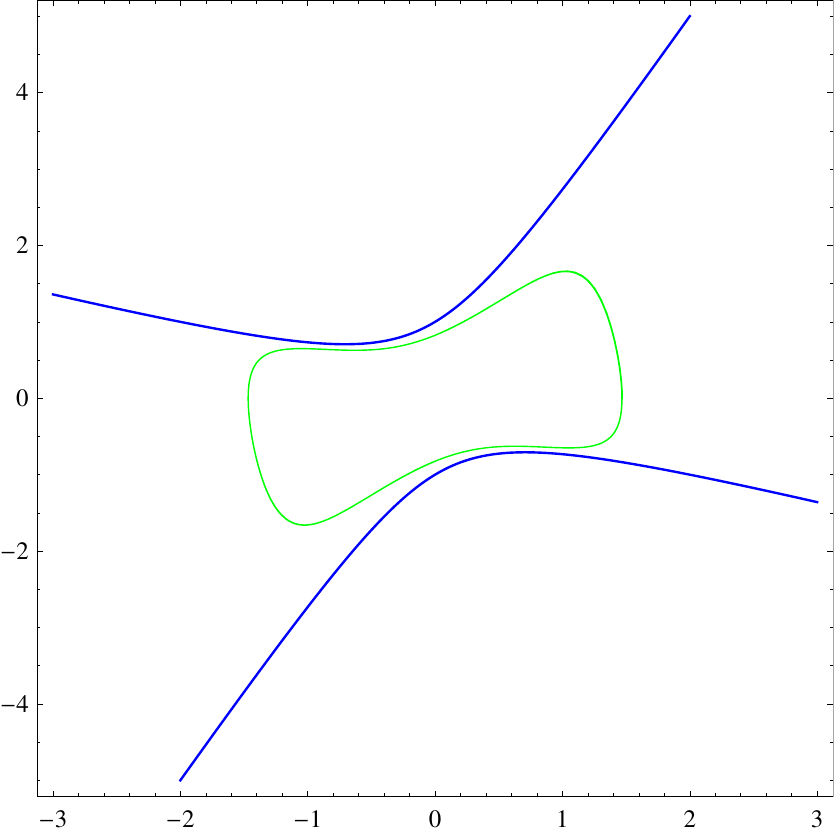}
\caption{Phase-portrait in the $(x,y)$-plane of system {\rm (\ref{eqex4})} when $\delta \, = \, -2$. Unstable limit cycle in green and transversal hyperbola in blue, $b_0=1$. }
\label{casoPacific_dm2}
\end{figure}

We need to study the type of singular points ``at infinity'', that is, in the equator of the Poincar\'e disk because these points may belong to the boundary of the Poincar\'e--Bendixson region and we need to ensure that they are not sinks or sources which will not fulfill the hypothesis of Theorem \ref{thpb}.  Due to the symmetries of the Poincar\'e compactification and the fact that the real plane is embedded twice in the sphere $\mathbb{S}^2$, if a point in the equator of the Poincar\'e-disk of the form $(X_0,Y_0,0)$ is a singular point for an embedded differential system (\ref{eq1}), then $(-X_0,-Y_0,0)$ is also a singular point and the behavior of the orbits of both points is also symmetric with respect to the involution $(X,Y,0) \to (-X,-Y,0)$. Thus, we only need to study the behavior of the singular points on $\mathbb{S}^2$ of the form $(X_0,Y_0,0)$ with $Y_0 \geq 0$. It is easy to see that the points $(\pm 1,0,0) \in \mathbb{S}^2$ are regular points for system (\ref{eqex4}) once embedded in the sphere. Thus, in order to embed system (\ref{eqex4}) in the sphere, we consider a local chart $(u,v)$ in $\mathbb{S}^2$ which corresponds to the open set $U \, = \, \left\{  (X,Y,Z) \in \mathbb{S}^2 \, : \, Y>0\right\}$ and the local map \[ \begin{array}{cccl} \phi: & U & \longrightarrow & \mathbb{R}^2 \\ & (X,Y,Z) & \mapsto & \displaystyle \left( \frac{X}{Y}, \, \frac{Z}{Y} \right) \, = \, (u,v). \end{array} \] This local chart corresponds to the notation $(U_2,\phi_2)$ in Chapter 5 of the book \cite{DuLlAr}. Thus, the change $x=u/v$, $y=1/v$ in system (\ref{eqex4}) and a rescalling of time, leads to the system
\begin{equation} \label{eqex4i}
\dot{u} \, = \, -u-\delta u^3+u^4+v^2, \quad \dot{v} \, = \, -(1+\delta u^2-u^3) v.
\end{equation}
The singular points ``at infinity'' for system (\ref{eqex4}) are the singular points of system (\ref{eqex4i}) on the straight line $v=0$. Note that this straight line is the intersection of the equator of the Poincar\'e disk with the open set $U$ of the local chart $(U,\phi)$. The $(u,v)$ coordinates of the singular points of system (\ref{eqex4i}) on the line $v=0$ are $(0,0)$ and $(u_i,0)$ with $u_i$ a root of the polynomial $u_i^3-\delta u_i^2-1$, $i=1,2,3$. Easy computations show that the point $(0,0)$ is a hyperbolic stable node, $\forall \delta$. The polynomial $u^3-\delta u^2-1$ always has a real root $u_1>0$, $\forall \delta$, which gives rise to a semi-hyperbolic saddle. For the classification of semi-hyperbolic singular points of a planar real differential system, see for instance Chapter 2 of \cite{DuLlAr}. When $0\, >\, \delta\, >\, -3/\sqrt[3]{4}\, \approx\, -1.8899$, we have no other singular points at the equator of the Poincar\'e disk. When $\delta\, =\, -3/\sqrt[3]{4}$, we have another singular point at $u=-\sqrt[3]{2}$, which gives a non-elementary singular point with parabolic sectors where the orbits leave from it. When $\delta<-3/\sqrt[3]{4}$, we have two other singular points $0>u_2>u_3$ which give $u_2$ a semi-hyperbolic unstable node and $u_3$ a semi-hyperbolic saddle.

\smallskip

We consider the polynomial $f(x,y)$ which gives rise to the transversal hyperbola $f=0$ given in (\ref{eqfex4}) and we consider it in the local chart $(U,\phi)$. We define $\tilde{f}(u,v) \, := \, v^2 \, f (u/v,\, 1/v)$, and we get that the transversal hyperbola in this chart is $\tilde{f}(u,v)=0$ with
\[ \tilde{f}(u,v) \, = \, - \, \frac{1}{b_0^2} \, + \, 2u + b_0^2u^2+v^2. \] Note that $f(0,0)=1$, so the points in the equator of the Poincar\'e disk which belong to the same region need to satisfy $\tilde{f}(u,v)>0$. We note that $\tilde{f}(0,0) = -1/b_0^2$, so the hyperbolic stable node is never in the Poincar\'e--Bendixson region. It is easy to show that $\tilde{f}(u_1,0)>0$, so the Poincar\'e--Ben\-dix\-son region always contains the semi-hyperbolic saddle $(u_1,0)$. A saddle is neither a sink or a source and, thus, this point may belong to the boundary of a Poincar\'e--Bendixson region. \par We denote $\delta_i \, = \, \displaystyle -3\sqrt[3]{-9+6\sqrt{3}}/\sqrt[3]{4} \, \approx \, -2.1103$. When $\delta > \delta_i$, we have that the semi-hyperbolic unstable node $(u_2,0)$ is such that $\tilde{f}(u_2,0)<0$ (outside the Poincar\'e--Bendixson region). When $\delta \leq \delta_i$, we no longer have a transversal hyperbola, so we have not the Poincar\'e--Bendixson region constructed with such a conic. We have shown the existence of a Poincar\'e--Bendixson region for $\delta \, = \, \delta_i$ and since system (\ref{eqex4}) is a semi-complete family of rotated vector fields with respect to $\delta$, we deduce the existence of a limit cycle for $\delta \in (\delta_i,0)$.

\subsection{Example 5: A quintic system with $1$ limit cycle\label{sect35}}

We consider the system studied in \cite{GGG14}:
\[ \dot{x} \, = \, y, \quad \dot{y} \, = \, -x+(a-x^2)(y+y^3), \]
with $a \in \mathbb{R}$. When $a\leq 0$, there are no limit cycles. When $a>0$, we take $a=b^2$. Thus, we consider the system
\begin{equation} \label{eqex5}  \dot{x} \, = \, y, \quad \dot{y} \, = \, -x+(b^2-x^2)(y+y^3), \end{equation} with $b>0$. In \cite{GGG14} it is proved that the origin is the only finite singular point and it is an unstable focus. The system is a semi-complete family of rotated vector fields with respect to $b^2$. For $b \in (0,b^*)$ there exists a unique limit cycle, which is hyperbolic and stable. For $b \geq b^*$ there are no limit cycles. The bifurcation value $b^*$ is such that $0.79<b^*<0.817$. Numerically, the bifurcation value is $b^* \, \approx \, 0.80629$.

\smallskip

We consider certain values of $b$ and, applying the method described in section \ref{sect2}, we find a Poincar\'e--Bendixson region bounded by a transversal ellipse in the interior of the region bounded by the limit cycle and a transversal hyperbola together with part of the Poincar\'e disk in its exterior. In this case, we consider an initial condition $(0,b_0)$ and a conic $f=0$ of the form $f(x,y)\, =\, 1 + s_3 x^2+s_4 xy + s_5 y^2$. After applying steps number one and number two of section \ref{sect2}, we get that
\[ f(x,y) \, = \, 1 \, + \, \frac{\left(b^4 b_0^4-b^4-1\right)x^2}{b_0^2} \, + \, \frac{2 b^2 \left(b_0^2+1\right) x y}{b_0^2} \, - \, \frac{y^2}{b_0^2}. \]

Figures \ref{p4Johannab05} and \ref{casoJohanna_b05} correspond to $b=1/2$ and Figures \ref{p4Johanna_b065349} and \ref{casoJohanna_b065349} correspond to $b \, =\, 65349/100000$. In each case, we first show the phase portrait of system (\ref{eqex5}) in the Poincar\'e disk, figures done with the program P4, and the second figure of each case provides the finite part of the transversal hyperbola together with the transversal ellipse (both in blue) and the limit cycle (numerically found) in red represented in the $(x,y)$-plane.

\begin{figure}[htb]
\includegraphics[width=0.5\textwidth]{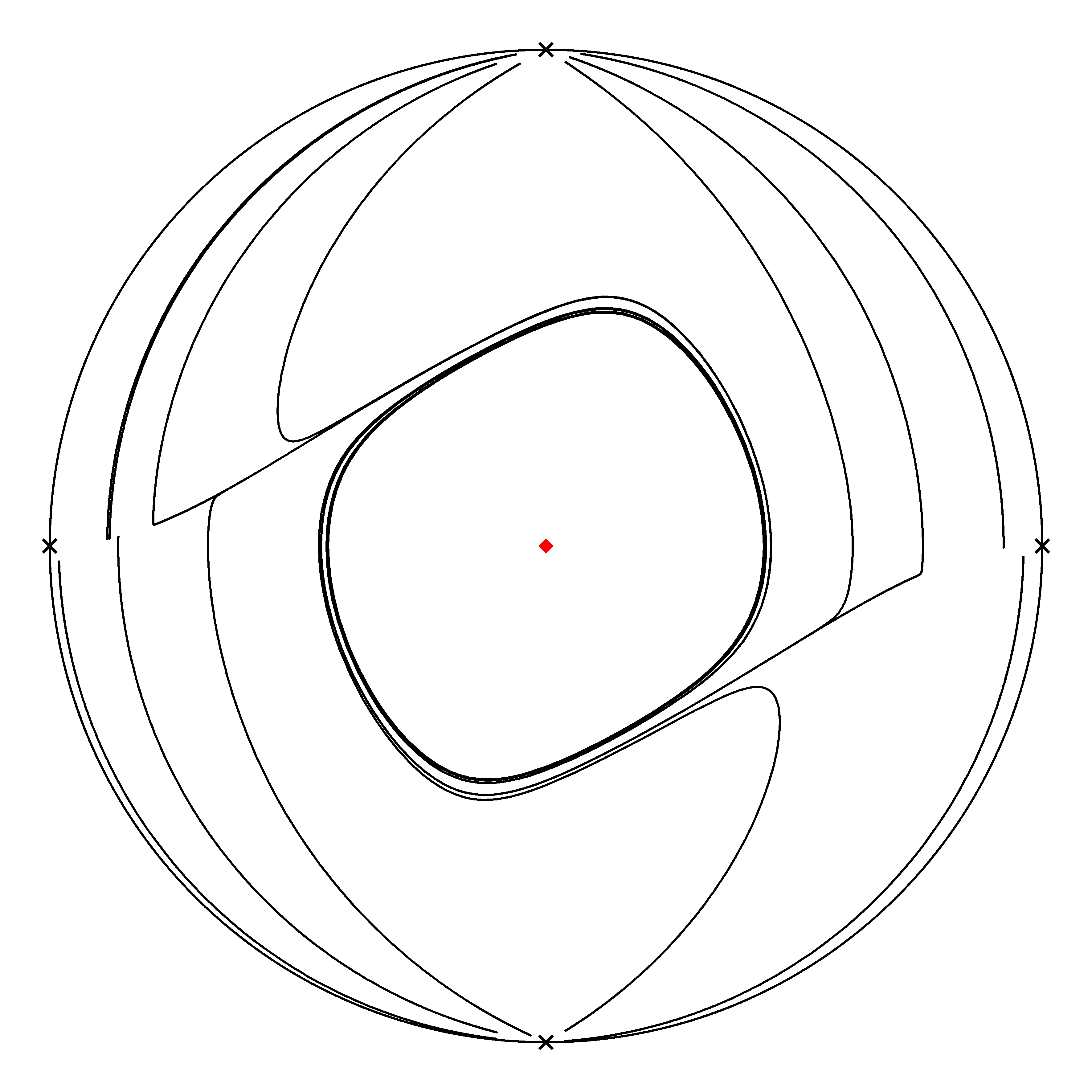}
\caption{Phase-portrait in the Poincar\'e disk of system {\rm (\ref{eqex5})} when $b\, = \, 1/2$.}
\label{p4Johannab05}
\end{figure}

\begin{figure}[htb]
\includegraphics[width=0.5\textwidth]{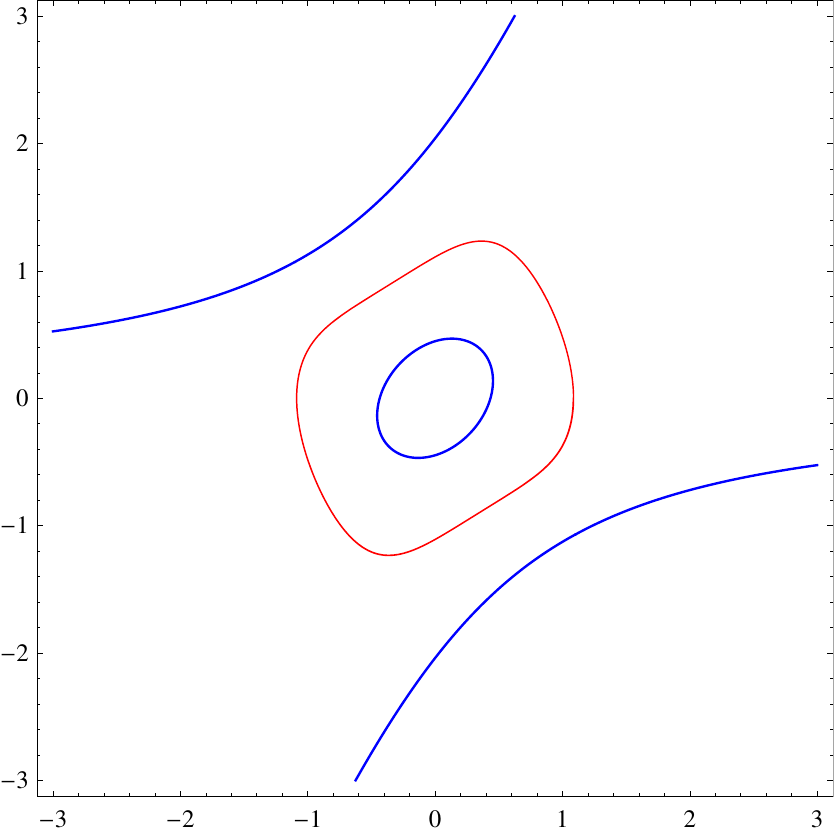}
\caption{System {\rm (\ref{eqex5})} with $b \, = \, 1/2$. Stable limit cycle in red. The transversal ellipse corresponds to $b_0 \, = \, \frac{2239}{5000} \, \approx \, 0.448$ and the transversal hyperbola to $b_0 \, = \, \frac{51}{25} \, = \, 2.04.$}
\label{casoJohanna_b05}
\end{figure}

\begin{figure}[htb]
\includegraphics[width=0.5\textwidth]{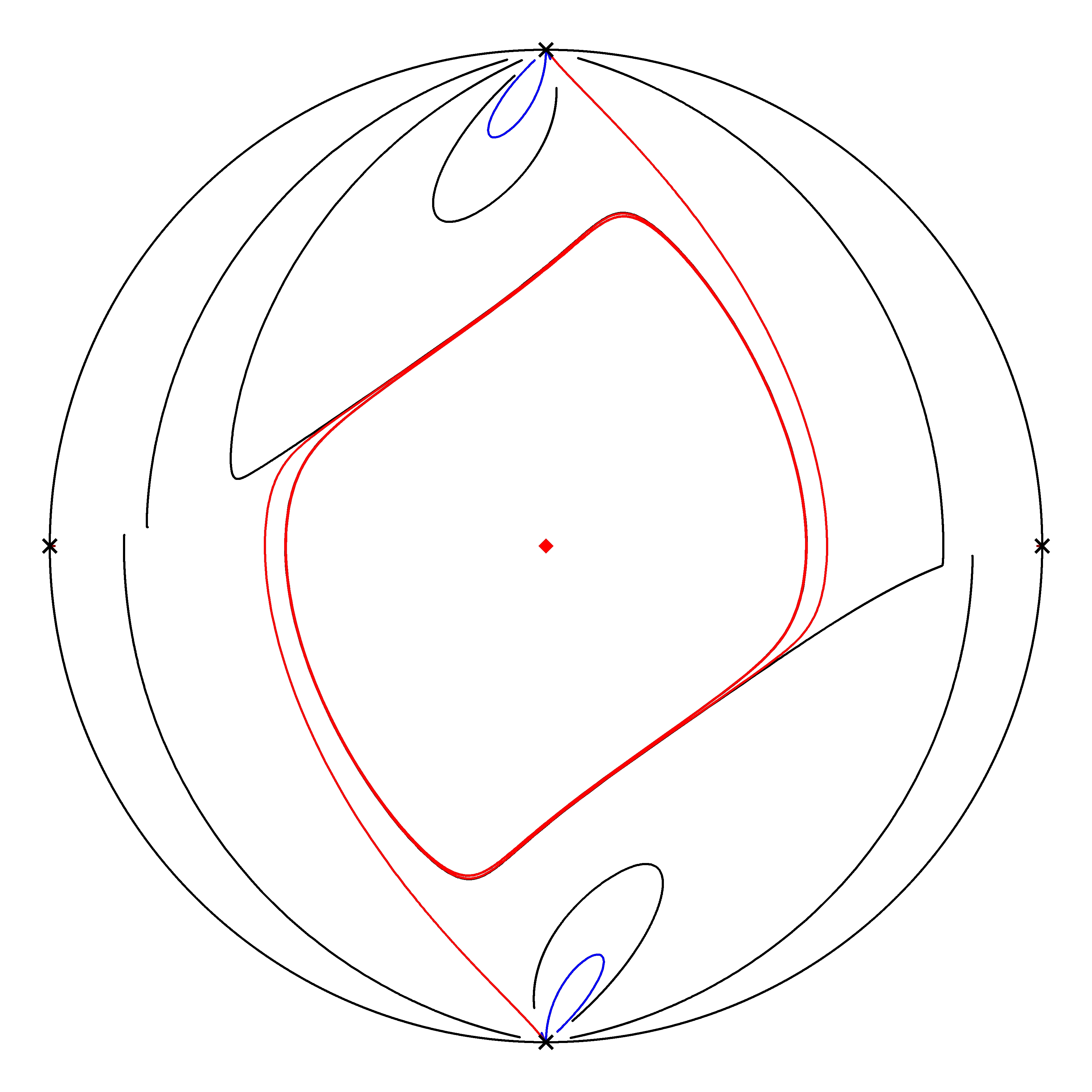}
\caption{Phase-portrait in the Poincar\'e disk of system {\rm (\ref{eqex5})} when $b\, = \, 0.65349$.}
\label{p4Johanna_b065349}
\end{figure}

\begin{figure}[htb]
\includegraphics[width=0.5\textwidth]{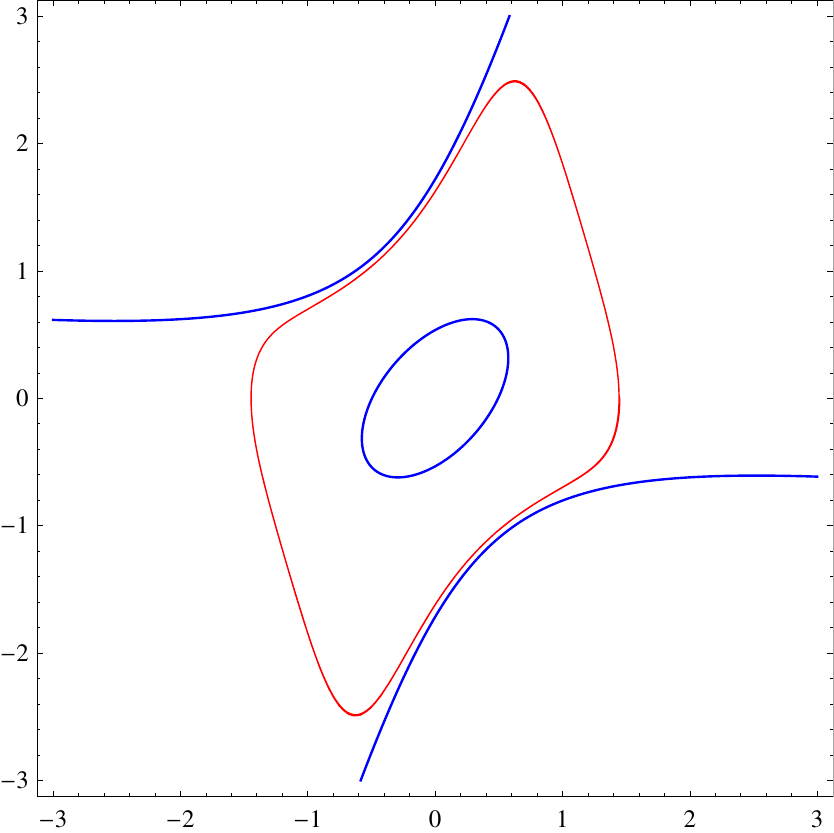}
\caption{System {\rm (\ref{eqex5})} with $b \, = \, 0.65349$. Stable limit cycle in red. The transversal ellipse corresponds to $b_0 \, = \, \frac{107}{200} \, = \, 0.535$ and the transversal hyperbola to $b_0 \, = \, \frac{859}{500} \, = \, 1.718.$}
\label{casoJohanna_b065349}
\end{figure}

There is a value $b>\frac{65349}{100000}$ and close to it for which the external transversal hyperbola does no longer exist. For $b= \frac{65349}{100000}$ we have a Poincar\'e--Bendixson region formed by an inner transversal ellipse and an outer transversal hyperbola together with part of the Poincar\'e disk. Since system (\ref{eqex5}) is a semi-complete family of rotated vector fields with respect to $b^2$, if we have a Poincar\'e--Bendixson region for $b=0.65349$, then we can follow the limit cycle with $b$ and prove the existence of a limit cycle for $0<b\leq \frac{65349}{100000}$. To prove the existence of this Poincar\'e-Bendixson region for $b=0.65349$, and analogously to the proof in the previous subsection for system (\ref{eqex4}), we need to study the singular points ``at infinity'' and the behavior of the orbits in a neighborhood of them. This study is already performed in \cite{GGG14}, we just give a summary of the facts that we need for our interests. The singular points at the equator of the Poincar\'e disk are $(0,\pm 1,0) \in \mathbb{S}^2$ and $(\pm 1,0,0) \in \mathbb{S}^2$. We have that the points $(0,\pm 1,0) $ are outside the Poincar\'e--Bendixson region that we consider for any $b>0$. The singular point $(1,0,0)$ is non-elementary and it can be shown that it is of saddle type (analogously for $(-1,0,0)$). These points are contained in the boundary of the considered Poincar\'e--Bendixson region for any $b>0$, but since they are not sinks nor sources, they allow to take the segment in the equator of the Poincar\'e disk as boundary of the Poincar\'e--Bendixson region. The study of the behavior of the orbits in a neighborhood of a non-elementary singularity can be done thorough a desingularization process, see Chapter 3 of \cite{DuLlAr}. \par Despite the bound that we encounter for the bifurcation value $b^{*}$ is worse than the one provided in \cite{GGG14}, our approach is far easier.

\subsection{Example 6: A quintic system with $2$ limit cycles\label{sect36}}

We consider the following differential system studied in \cite{GasGia10}.
\begin{equation} \label{eqex6} \begin{array}{lll}
\dot{x} & = & \displaystyle -y+4x -\frac{49}{10} x^3-\frac{26}{5} xy^2+\frac{1}{5} x^2y^2 + x^5+2x^3y^2+xy^4, \vspace{0.2cm}  \\
\dot{y} & = & \displaystyle x+4y -\frac{23}{5} x^2y-5y^3-\frac{1}{5} xy^3-\frac{2}{15} y^4+x^4y+2x^2y^3+y^5.
\end{array} \end{equation}
In \cite{GasGia10} it is proved that the origin is the only finite singular point and it is an unstable focus and that system (\ref{eqex6}) has exactly $2$ limit cycles.

In this example we find two Poincar\'e--Bendixson regions which allow to show the existence of at least one limit cycle in each of them. We apply the method described in section \ref{sect2} and in this case, we consider an initial condition $(a_0,0)$ and a conic $f=0$ of the form $ f(x,y)\, =\, 1 + s_3 x^2+s_4 xy + s_5 y^2.$ We obtain the following expression for the polynomial $f(x,y)$:
\[ \begin{array}{lll}
f(x,y) & = & \displaystyle 1 \, - \, \frac{x^2}{a_0^2} \, + \, \frac{\left(10 a_0^4-49 a_0^2+40\right) x y}{5a_0^2}
\vspace{0.2cm} \\ & & \displaystyle \frac{\left(200 a_0^8-1010 a_0^6+147 a_0^4+3800 a_0^2-3300\right) y^2}{100 a_0^2}. \end{array} \]

The outer boundary is formed by a transversal ellipse. The boundary between the two limit cycles is formed by two bands of transversal ellipses. The inner boundary is formed by a band of transversal ellipses, starting at the origin. There are several bands of transversal ellipses and, in Figure \ref{casometodoHA2}, we have chosen more ellipses than the ones needed to give two Poincar\'e--Bendixson regions because we want to restrict, as close as possible, the location of the limit cycles. Figure \ref{casometodoHA2} shows the transversal boundaries of the Poincar\'e--Bendixson regions (in blue), the stable limit cycle in red and the unstable limit cycle in green (both found numerically), represented in the $(x,y)$-plane. The transversal ellipses correspond to the values
\[ a_0 \, = \, \frac{1015}{1000}, \quad a_0 \, = \, \frac{10189}{10000}, \quad a_0 \, = \, \frac{196531}{100000} \quad \mbox{and} \quad a_0 \, = \, \frac{196665}{100000}. \]

\begin{figure}[htb]
\includegraphics[width=0.5\textwidth]{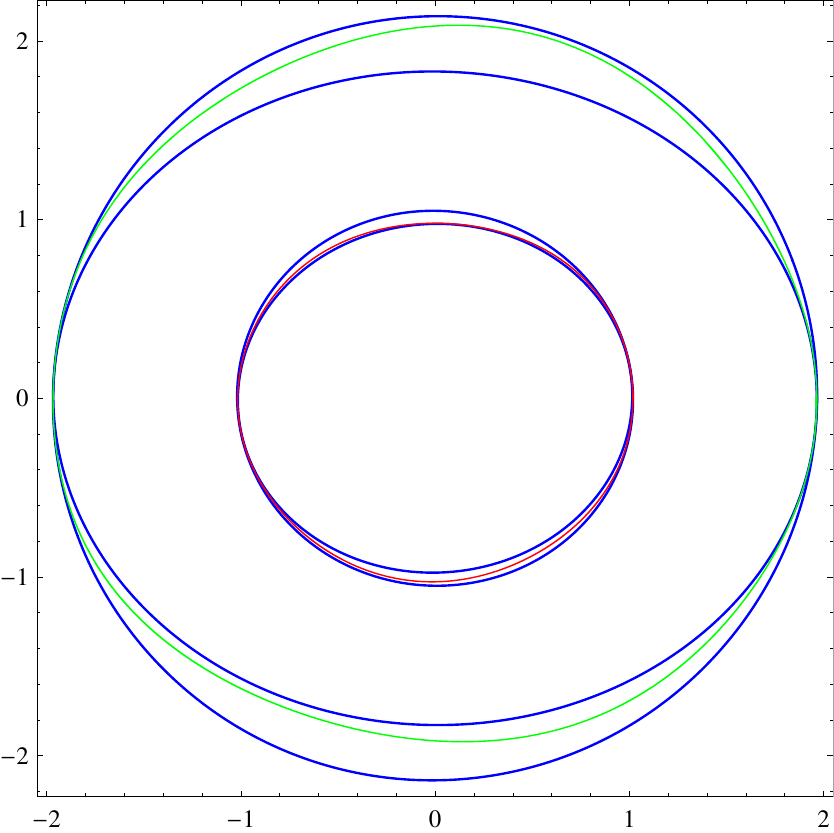}
\caption{System {\rm (\ref{eqex6})}. Stable limit cycle in red, unstable limit cycle in green. Boundaries of the Poincar\'e--Bendixson regions in blue.}
\label{casometodoHA2}
\end{figure}

Note that the cuts of the transversal ellipses with the horizontal axis give a very good approximation of the cuts of the limit cycle with the horizontal axis. Thus, we obtain by these transversal ellipses a good location of the limit cycles.

\section*{Acknowledgements}

The authors are partially supported by a MINECO/FEDER grant number MTM2011-22877 and by an AGAUR (Generalitat de Ca\-ta\-lu\-nya) grant number 2014SGR 1204.

\end{document}